\newtheorem{thm}{Theorem}[section]
\newtheorem{prop}[thm]{Proposition}
\newtheorem{cor}[thm]{Corollary}
\newtheorem{lemma}[thm]{Lemma}
\newtheorem{defn}[thm]{Definition}
\newtheorem{preremark}[thm]{Remark}
\newenvironment{remark}{\begin{preremark}\rm}{\medskip \end{preremark}}
\numberwithin{equation}{section}
\newcommand{\norm}[1]{\left\Vert#1\right\Vert}
\newcommand{\abs}[1]{\left\vert#1\right\vert}
\newcommand{\R}{\mathbb R}
\DeclareMathOperator{\Vol}{Vol}
\newcommand{\grad} {\nabla}
\newcommand{\dd} {\mathrm{d}}
\DeclareMathOperator{\supp}{supp}
\DeclareMathOperator{\dv}{div}
\DeclareMathOperator{\curl}{curl}
\DeclareMathOperator{\Ric}{Ric}
\def\H{\mathbb H^{N}(-a^{2})}
\def\be{\begin{equation}}
\def\ee{\end{equation}}
\def\qand{\quad\mbox{and}\quad}
\newcommand{\tV}{\textbf V}
\begin{document}
\title[Hodge decomposition]{Hodge decomposition of the Sobolev space $H^1$ on a space form of nonpositive curvature}

\author[Chan]{Chi Hin Chan}
\address{Department of Applied Mathematics, National Chiao Tung University,1001 Ta Hsueh Road, Hsinchu, Taiwan 30010, ROC}
\email{cchan@math.nctu.edu.tw}

\author[Czubak]{Magdalena Czubak}
\address{Department of Mathematics\\
University of Colorado Boulder\\ Campus Box 395, Boulder, CO, 80309, USA}
\email{czubak@math.colorado.edu}

\author[Pinilla Suarez]{Carlos Pinilla Suarez}
\address{Department of Mathematics\\
University of Colorado Boulder\\ Campus Box 395, Boulder, CO, 80309, USA}
\email{Carlos.PinillaSuarez@colorado.EDU}

\begin{abstract}
The Hodge decomposition is well-known for compact manifolds.  The result has been extended by Kodaira to include non-compact manifolds and $L^2$ forms.  We further extend the Hodge decomposition to the Sobolev space $H^1$ for general $k$-forms on non-compact manifolds of nonpositive constant sectional curvature. As a result, we also obtain a decomposition on $\R^N$.
\end{abstract}
%\date{\today}
\subjclass[2010]{58A12 , 58A14, 31C12}
\keywords{Hodge decomposition, Sobolev, hyperbolic space, Helmholtz-Weyl decomposition}
\maketitle

\tableofcontents

 \section{Introduction}
Hodge decompositions are widely studied and have many applications.  The main idea is to take an object, say a tensor, and decompose it into a sum of what can be viewed as canonical pieces.   The Hodge decomposition is well-known for compact manifolds.  If we let $\Lambda^k$ denote the set of smooth differential forms on a Riemannian manifold $M$,  and $\mathcal H^k$ denote the harmonic forms on $M$, then we have

\begin{thm}[Hodge decomposition, compact manifolds]  Let $(M,g)$ be a compact Riemannian manifold without boundary.  With respect to the metric $g$, we have 
\[
\Lambda^k=\dd\Lambda^{k-1} \oplus  \dd^\ast \Lambda^{k+1}\oplus \mathcal H^k.
\]
\end{thm}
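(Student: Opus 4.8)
The plan is to reduce the decomposition to the spectral theory of the Hodge Laplacian $\lap = \dd\dd^\ast + \dd^\ast\dd$ acting on $\Lambda^k$. First I would equip $\Lambda^k$ with the $L^2$ inner product induced by $g$ and the Riemannian volume form, and record that $\dd^\ast$ is the formal adjoint of $\dd$: since $M$ has no boundary, Stokes' theorem gives $\langle \dd\alpha, \beta\rangle = \langle \alpha, \dd^\ast\beta\rangle$ for all $\alpha \in \Lambda^{k-1}$ and $\beta \in \Lambda^k$, with no boundary contribution. A short computation then shows $\langle \lap\omega, \omega\rangle = \norm{\dd\omega}^2 + \norm{\dd^\ast\omega}^2$, so that $\omega$ is harmonic (that is, $\lap\omega = 0$) if and only if $\dd\omega = 0$ and $\dd^\ast\omega = 0$ simultaneously. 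In particular $\mathcal H^k = \ker\lap$.

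The analytic heart of the argument is the statement that $\lap$ is a second-order, self-adjoint, \emph{elliptic} operator on the closed manifold $M$. I would invoke the standard consequences of elliptic theory on compact manifolds: the kernel $\mathcal H^k$ is finite dimensional, the range of $\lap$ is closed in $L^2$, and one obtains the orthogonal splitting
\[
\Lambda^k = \mathcal H^k \oplus \lap\Lambda^k.
\]
Equivalently, there is a Green's operator $G$, self-adjoint and commuting with $\dd$ and $\dd^\ast$, such that $\omega = H\omega + \lap G\omega$, where $H$ denotes the $L^2$-orthogonal projection onto $\mathcal H^k$. This is the step I expect to be the main obstacle, since it requires the full machinery of elliptic estimates: Gårding's inequality, Rellich's compactness theorem to control the finite-dimensional kernel and the closed range, and elliptic regularity to return from the $L^2$ setting to smooth forms.

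Granting this splitting, the decomposition follows formally. I would expand the middle summand as
\[
\lap\Lambda^k = \dd\dd^\ast\Lambda^k + \dd^\ast\dd\Lambda^k \subseteq \dd\Lambda^{k-1} + \dd^\ast\Lambda^{k+1},
\]
so that every $\omega$ can be written $\omega = \dd\alpha + \dd^\ast\beta + h$ with $\alpha = \dd^\ast G\omega \in \Lambda^{k-1}$, $\beta = \dd G\omega \in \Lambda^{k+1}$, and $h = H\omega \in \mathcal H^k$. Finally I would verify that the three subspaces are mutually $L^2$-orthogonal, which upgrades the sum to a direct sum: $\langle \dd\alpha, \dd^\ast\beta\rangle = \langle \dd\dd\alpha, \beta\rangle = 0$ because $\dd^2 = 0$, while $\langle \dd\alpha, h\rangle = \langle \alpha, \dd^\ast h\rangle = 0$ and $\langle \dd^\ast\beta, h\rangle = \langle \beta, \dd h\rangle = 0$ because harmonic forms are simultaneously closed and coclosed. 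This establishes the claimed orthogonal direct sum decomposition.
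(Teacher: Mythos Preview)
Your proposal is a correct outline of the standard textbook proof via elliptic theory of the Hodge Laplacian and the Green's operator. However, there is nothing to compare it against: the paper does not prove this theorem. It is stated in the introduction purely as classical background, with no proof given, before the authors move on to the Kodaira decomposition (Theorem~\ref{Kodaira}) and then to their own result (Theorem~\ref{Thm1}) for $H^1$ on $\H$. The only argument in the paper that is even in the spirit of a Hodge-type decomposition is the review of the $L^2$ case in Section~\ref{mainidea}, and that follows Weyl's method of orthogonal projections (show the orthogonal complement of $\overline{\dd\Lambda^{k-1}_c}\oplus\overline{\dd^\ast\Lambda^{k+1}_c}$ coincides with $\mathcal H^k$) rather than the Green's-operator construction you sketch. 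Your approach is the right one for the compact smooth statement as written, but be aware you are supplying a proof the paper deliberately omits.
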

Here, $\dd$ denotes the exterior derivative, and $\dd^\ast$ its adjoint (these are reviewed in Section \ref{Prelim}).  This means that for example, when $k=1$, any smooth $1$-form $\alpha$ on $M$ can be uniquely decomposed as
\[
\alpha=\dd f+\dd^\ast \omega + h,
\]
where $f$ is a function, $\omega$ is a $2$-form and $h$ is a harmonic $1$-form.

This result has been extended by Kodaira to include $L^2$ $k$-forms on non-compact manifolds \cite{Kodaira}.  
 Let $\Lambda_c^k(M)$ denote the space of all smooth $k$-forms with compact support on $M$, then the Hodge-Kodaira decomposition is 
 \begin{thm}[Hodge-Kodaira decomposition for non-compact manifolds]\cite{Kodaira}\label{Kodaira}  Let $(M,g)$ be a complete Riemannian manifold without boundary.  Then 
\[
L^2(\Lambda^k)=\overline{\dd\Lambda^{k-1}_c}^{L^2}\oplus \overline{\dd^\ast \Lambda^{k+1}_c}^{L^2}\oplus \mathcal H^k,
\]
where $\mathcal H^k$ denotes the harmonic $L^2$ $k$-forms on $M$.
\end{thm}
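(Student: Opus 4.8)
The plan is to prove the Hodge-Kodaira decomposition by realizing it as an orthogonal decomposition of the Hilbert space $L^2(\Lambda^k)$ into the closed subspace $\overline{\dd\Lambda^{k-1}_c}^{L^2}\oplus\overline{\dd^\ast\Lambda^{k+1}_c}^{L^2}$ and its orthogonal complement, and then identifying that complement with the harmonic $L^2$ forms $\mathcal H^k$. The starting observation is the key integration-by-parts identity: for $\varphi\in\Lambda^{k-1}_c$ and $\psi\in\Lambda^{k+1}_c$, completeness of $(M,g)$ guarantees there are no boundary terms, so $\langle \dd\varphi,\dd^\ast\psi\rangle_{L^2}=\langle \dd\dd\varphi,\psi\rangle_{L^2}=0$ because $\dd\dd=0$. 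Hence the two closed subspaces $\overline{\dd\Lambda^{k-1}_c}$ and $\overline{\dd^\ast\Lambda^{k+1}_c}$ are mutually orthogonal, and the first two summands in the claimed decomposition are genuinely orthogonal as stated.

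First I would set
\[
\mathcal E^k=\overline{\dd\Lambda^{k-1}_c}^{L^2}\oplus\overline{\dd^\ast\Lambda^{k+1}_c}^{L^2},
\]
a closed subspace of $L^2(\Lambda^k)$, and write $L^2(\Lambda^k)=\mathcal E^k\oplus(\mathcal E^k)^\perp$ by the Hilbert space projection theorem. The whole content then reduces to the identification $(\mathcal E^k)^\perp=\mathcal H^k$. To do this I would characterize $(\mathcal E^k)^\perp$ by testing against the two generating families: a form $\omega\in L^2(\Lambda^k)$ lies in $(\mathcal E^k)^\perp$ if and only if $\langle\omega,\dd\varphi\rangle=0$ for all $\varphi\in\Lambda^{k-1}_c$ and $\langle\omega,\dd^\ast\psi\rangle=0$ for all $\psi\in\Lambda^{k+1}_c$. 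Interpreting these two conditions distributionally (weakly), the first says $\dd^\ast\omega=0$ and the second says $\dd\omega=0$, both in the sense of distributions. Thus $(\mathcal E^k)^\perp$ is exactly the space of $L^2$ forms that are simultaneously weakly closed and weakly coclosed.

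The remaining step is to show that an $L^2$ form $\omega$ which is weakly closed and weakly coclosed is in fact harmonic, i.e.\ smooth and satisfying $\lap\omega=0$ (equivalently $\dd\omega=0$ and $\dd^\ast\omega=0$ in the classical sense), so that $(\mathcal E^k)^\perp=\mathcal H^k$ as claimed. This is where elliptic regularity enters: since $\omega$ is a weak solution of the Hodge-Laplace equation $\lap\omega=(\dd\dd^\ast+\dd^\ast\dd)\omega=0$, and $\lap$ is an elliptic operator, Weyl-type interior regularity upgrades $\omega$ to a smooth form, after which the weak identities $\dd\omega=0$ and $\dd^\ast\omega=0$ hold strongly and $\omega\in\mathcal H^k$. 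I expect the main obstacle to be precisely this regularity and the careful passage between the weak (distributional) and strong formulations of closedness and coclosedness; the orthogonality and projection arguments are formal, but justifying that the weak annihilator conditions really force smoothness and genuine harmonicity—using completeness of $M$ to rule out boundary contributions and to apply the elliptic estimates locally—is the delicate part. This being a stated result of Kodaira, I would ultimately invoke \cite{Kodaira} for the regularity input and present the Hilbert-space orthogonality structure as the organizing framework.
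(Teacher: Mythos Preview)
Your proposal is correct and follows essentially the same route as the paper's review of the $L^2$ case in Section~\ref{mainidea}: define the closed subspace $X=\overline{\dd\Lambda^{k-1}_c}^{L^2}\oplus\overline{\dd^\ast\Lambda^{k+1}_c}^{L^2}$, use Hilbert-space projection to write $L^2=X\oplus X^\perp$, and identify $X^\perp$ with $\mathcal H^k$ by testing against $\dd\varphi$ and $\dd^\ast\psi$. The only cosmetic difference is that the paper packages the passage from ``weakly closed and coclosed in $L^2$'' to ``harmonic'' as a citation to the Andreotti--Vesentini result (Theorem~\ref{AV}), whereas you invoke elliptic regularity for the Hodge Laplacian directly; these are the same mechanism. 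One small remark: your appeal to completeness for the vanishing of boundary terms in $\langle\dd\varphi,\dd^\ast\psi\rangle=\langle\dd\dd\varphi,\psi\rangle$ is unnecessary there, since $\varphi,\psi$ already have compact support; completeness enters elsewhere (e.g.\ in Theorem~\ref{AV}).
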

Note, for example, $\overline{\dd\Lambda^k_c}^{L^2}$ means the closure in the $L^2$ norm of the image of the operator $\dd$ acting on the smooth $k$-forms, with compact support on $M$.

 Kodaira used the functional analysis approach following Weyl \cite{Weyl}. Due to the important contributions of de Rham \cite{deRham_thesis, DeRhamEng}, Hodge  \cite{Hodge, HodgeBook}, Weyl \cite{Weyl, Weyl_Hodge} and Kodaira 
 \cite{Kodaira}  such decompositions, when referred to, can be seen to include, besides Hodge, the names of any of these mathematicians.
 
 We also mention the result of Gromov \cite{Gromov91} of what is called the strong $L^2$ decomposition under the spectral gap assumptions. 
%  Besides studying the decomposition of the $L^2$ space, 
  In addition, one can consider the decomposition of $L^p$ spaces for $p\neq 2$.  See for example \cite{Scott, XDLi09, Amar17}. Besides the perspective of the study being taken to be either $L^2$ or general $L^p$, compact or non-compact manifolds, one can investigate manifolds and domains with or without boundaries, general Sobolev spaces, weighted and unweighted; and further take the decompositions regarding other elliptic operators \cite{BergerEbin, Friedrichs, Morrey56, MorreyEells, McOwen79, Cantor81, Schwarz95, MitreaTaylor_book}.  
 
 In spite of these vast developments, it is to our surprise that we have not found anywhere the decomposition written for the Sobolev space $H^1$ (un-weighted) on a non-compact manifold without boundary. Hence the goal of this article is to provide a relatively simple proof, and in the process, to give an expository review of the proof of the Hodge-Kodaira decomposition in $L^2$.

So now, if we would like to do the Hodge decomposition of the Sobolev space $H^1$, then comparing to Theorem \ref{Kodaira}, it is natural to expect to obtain the following decomposition
\be\label{hodgep}
H^1(\Lambda^k)=\overline{\dd\Lambda^{k-1}_c}^{H^1}\oplus \overline{\dd^\ast \Lambda^{k+1}_c}^{H^1}\oplus \mathcal H^k,
\ee
where now we take the closure in the Sobolev space $H^1$, and $\mathcal H^k$ are harmonic $k$-forms in $H^1$.  However, since one can show the harmonic $k$-forms in $L^2$ are actually in $H^1$ (see Section \ref{harmon}), so in \eqref{hodgep}, $\mathcal H^k$ can denote the harmonic $k$-forms in $L^2$ as before.
 
We prove the following theorem.
\begin{thm}[Hodge Decomposition in $H^1$ for $k$-forms]\label{Thm1}
Let $a\geq 0$, and let $\Lambda^k$ be the space of differential $k$-forms over $\H$, then \eqref{hodgep} holds.  Moreover, if $\alpha \in H^1$, then we have
\be\label{hodgepal}
\alpha=\dd \beta+\dd^\ast \omega+\gamma,
\ee
where $\dd \beta$ is in the $H^1$ closure of $\dd\Lambda^{k-1}_c$, analogously $\dd^\ast \omega \in \overline{\dd^\ast \Lambda^{k+1}_c}^{H^1}$, and $\gamma$ is a harmonic $L^2$ $k$-form.
\end{thm}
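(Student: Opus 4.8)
The plan is to bootstrap the $L^2$ Hodge--Kodaira decomposition of Theorem~\ref{Kodaira} up to $H^1$, exploiting the fact that on a space form the Weitzenb\"ock curvature term is a constant multiple of the identity. The geometric engine is a norm equivalence. For $\phi\in\Lambda_c^k$ the Weitzenb\"ock formula on $\H$ reads $\lap\phi=\grad^\ast\grad\phi-k(N-k)a^2\phi$, since in constant sectional curvature $-a^2$ the Weitzenb\"ock curvature operator on $k$-forms is $-k(N-k)a^2$ times the identity. Pairing with $\phi$ and integrating by parts (legitimate for compact support) gives
\be\label{bochnerplan}
\norm{\grad\phi}_{L^2}^2=\norm{\dd\phi}_{L^2}^2+\norm{\dd^\ast\phi}_{L^2}^2+k(N-k)a^2\norm{\phi}_{L^2}^2.
\ee
Because $k(N-k)a^2\geq 0$, \eqref{bochnerplan} shows that on $\Lambda_c^k$ the $H^1$-norm is equivalent to the graph norm $\norm{\phi}_{L^2}^2+\norm{\dd\phi}_{L^2}^2+\norm{\dd^\ast\phi}_{L^2}^2$; since $\H$ is complete, $\Lambda_c^k$ is dense in both topologies, so the equivalence propagates to all of $H^1$. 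This is the step where the hypothesis of constant curvature is genuinely used, and it is the tool that converts $H^1$-control into control of $\dd$ and $\dd^\ast$.

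First I would feed $\alpha\in H^1\subset L^2$ into Theorem~\ref{Kodaira}, writing $\alpha=E+C+\gamma$ as an $L^2$-orthogonal sum with $E=\lim_n\dd\beta_n$ and $C=\lim_n\dd^\ast\omega_n$ in $L^2$ and $\gamma$ harmonic. Testing against smooth compactly supported forms and using $\dd\dd=0$, $\dd^\ast\dd^\ast=0$ and passing to the limit gives $\dd E=0$ and $\dd^\ast C=0$ weakly, while $\dd\gamma=\dd^\ast\gamma=0$. Since $\alpha\in H^1$ has $\dd\alpha,\dd^\ast\alpha\in L^2$, comparing components in $\dd\alpha=\dd E+\dd C+\dd\gamma$ and $\dd^\ast\alpha=\dd^\ast E+\dd^\ast C+\dd^\ast\gamma$ yields $\dd C=\dd\alpha$ and $\dd^\ast E=\dd^\ast\alpha$, both in $L^2$. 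Thus $E$ and $C$ lie in the maximal graph domain of $(\dd,\dd^\ast)$; applying \eqref{bochnerplan} to a Gaffney approximating sequence (again completeness) shows $\grad E,\grad C\in L^2$, so $E,C\in H^1$, and $\gamma\in H^1$ by Section~\ref{harmon}. Directness of the sum is then free: each $H^1$-closure sits inside the corresponding $L^2$-closure, and those are mutually $L^2$-orthogonal, so the three summands meet only at $0$.

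The remaining and genuinely delicate point is the upgrade from the $L^2$-closure to the $H^1$-closure, namely $E\in\overline{\dd\Lambda_c^{k-1}}^{H^1}$ and $C\in\overline{\dd^\ast\Lambda_c^{k+1}}^{H^1}$. This does not follow formally: the exact $L^2$-approximants $\dd\beta_n$ supplied by Theorem~\ref{Kodaira} control only $\norm{\dd\beta_n-E}_{L^2}$, and $L^2$-convergence of exact forms says nothing about convergence of their codifferentials. By the norm equivalence the task is to approximate $E$ in the graph norm by exact forms, and since $\dd E=0=\dd\dd\beta_n$ the only quantity that must be forced to converge is $\dd^\ast\dd\beta_n\to\dd^\ast E$ in $L^2$. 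I would secure this by a primitive construction: because $\dd E=0$ on the contractible space $\H$, one produces (after mollifying $E$, using homogeneity so that smoothing commutes with $\dd$) a smooth primitive $\beta$ with $\dd\beta=E$, and then truncates, $\beta\mapsto\chi_R\beta$ with $\chi_R$ supported on geodesic balls. Then $\dd(\chi_R\beta)=\chi_R E+\dd\chi_R\wedge\beta\in\dd\Lambda_c^{k-1}$ is exact and compactly supported, and the bounded geometry of the constant-curvature space $\H$ lets one choose $\chi_R$ with $\norm{\grad\chi_R}_{L^\infty}$ small on the annuli where it varies, so that the commutator terms $\dd\chi_R\wedge\beta$ and $[\dd^\ast,\chi_R]\beta$ tend to $0$ in $L^2$ and $\dd(\chi_R\beta)\to E$ in the graph norm. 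The symmetric argument with $\dd^\ast$ and a coexact primitive handles $C$; relatedly, completeness can instead be used to show $\Lambda_c$ is a core for the relevant operator, forcing the $L^2$- and $H^1$-closures of the exact and coexact spaces to coincide.

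The main obstacle is exactly this last upgrade, since it is the only place where the passage from $L^2$ to $H^1$ is not automatic, and it is the Bochner identity \eqref{bochnerplan}---valid precisely because the curvature is constant---together with the bounded geometry it provides that makes the truncation estimates close. Once the upgrade is in hand, assembling the pieces gives $\alpha=\dd\beta+\dd^\ast\omega+\gamma$ with $\dd\beta\in\overline{\dd\Lambda_c^{k-1}}^{H^1}$, $\dd^\ast\omega\in\overline{\dd^\ast\Lambda_c^{k+1}}^{H^1}$ and $\gamma$ a harmonic $L^2$ $k$-form, which is \eqref{hodgepal}, while the orthogonal directness established above is \eqref{hodgep}.
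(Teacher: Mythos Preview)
Your route is genuinely different from the paper's. The paper never invokes Theorem~\ref{Kodaira} as a black box; it works directly in $H^1$, takes $X=\overline{\dd\Lambda^{k-1}_c}^{H^1}\oplus\overline{\dd^\ast\Lambda^{k+1}_c}^{H^1}$, and identifies $X^\perp$ (for the $H^1$ inner product) with $\mathcal H^k$ by showing that any $u\in X^\perp$ satisfies an elliptic equation $-\Delta h+(a^2k(N-k)+1)h=0$ for $h=\dd^\ast u$ (and dually for $\dd u$), then tests against $\phi_R^2 h$ to force $h=0$. The $H^1$-orthogonality of the three summands is proved separately and directly from the Weitzenb\"ock identity (Lemmas~\ref{directsum} and~\ref{incl1a}); you only argue \emph{directness} via $L^2$-orthogonality, which is weaker than what \eqref{hodgep} asserts, though the missing orthogonality is easy to supply by the same computations the paper does.

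The substantive gap is your upgrade step. Your cutoff argument needs $\|\dd\chi_R\wedge\beta\|_{L^2}\to 0$, which with $\|\dd\chi_R\|_{L^\infty}\lesssim R^{-1}$ requires $\|\beta\|_{L^2(B_{2R}\setminus B_R)}=o(R)$. A Poincar\'e-lemma primitive of a closed $L^2$ form on $\H$ carries no such bound; on a space with exponential volume growth the primitive can be enormous on annuli even when $E$ is in $L^2$. The same objection applies to the commutator $[\dd^\ast,\chi_R]\beta$ that you need for graph-norm convergence. Your fallback sentence about $\Lambda_c$ being a core is too vague to close this: Gaffney gives you $\phi_n\in\Lambda_c^k$ approximating $E$ in the graph norm, but those $\phi_n$ are not exact, and projecting them onto $\overline{\dd\Lambda_c^{k-1}}^{L^2}$ lands you back in the $L^2$-closure, not in $\dd\Lambda_c^{k-1}$ itself. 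This is exactly the difficulty the paper's approach sidesteps: by characterising $X^\perp$ rather than trying to enlarge approximating sequences, the paper never has to manufacture compactly supported primitives with quantitative control. If you want to rescue your strategy, you would need either a genuine $L^2$-bounded right inverse to $\dd$ on closed forms orthogonal to harmonics (which exists on $\H$ via the spectral gap for $a>0$, but is a separate nontrivial input and fails for $a=0$), or an argument that the $H^1$- and $L^2$-closures of $\dd\Lambda_c^{k-1}$ agree on $H^1$—which, once unwound, is essentially the content of the paper's Proposition~\ref{incl2}.
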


 Of special interest in PDE theory is the case of $1$-forms.  The reason for this is that on a Riemannian manifold, $1$-forms are naturally identified with vector fields, which in turn, relate to the solutions of systems of PDE.   The identification between  $1$-forms and vector fields  is accomplished using the Riemannian metric (see below Section \ref{Mgdef}).

The case of the Hodge decompositions for 1-forms, or equivalently that of the vector fields,  is often called Helmholtz decomposition or Helmholtz-Weyl decomposition, and has applications to fluid mechanics, electromagnetism and the study of boundary value problems. This goes back to the aforementioned work of Weyl \cite{Weyl}, and even further back, to the work of Helmholtz in 1858 \cite{Helmholtz1858}.  Classically, it means writing something as divergence free plus a gradient. For relevant works we refer, for example, to \cite{BS1960, SimaderSohr, Galdi, Schwarz95, Mitrea2D, Mitrea3D, MitreaJFA, MitreaTaylor_book}.
 
 We allow $a=0$ in Theorem \ref{Thm1} as then we can recover the Euclidean case, for which of course, there are no nontrivial harmonic $k$-forms in $H^1$, and in the case of $k=1$, the decomposition reduces to the case of Helmholtz-Weyl decomposition.
 
  It is interesting to consider the case, when there are nontrivial harmonic forms present.  If $M=\H$ has a constant negative sectional curvature, by work of Dodziuk \cite{Dodziuk} we know there exist nontrivial $L^2$ harmonic forms of degree $k=\frac N2$, where $\dim M=N$.  In 2D this corresponds to nontrivial harmonic $1-$forms, in $4D$ to nontrivial harmonic $2-$forms, and so on.  This is a reason we consider the negative curvature case as we know there are nontrivial harmonic forms present.  In addition, this is a natural follow-up to the previous work of the first two authors.  
 
 In particular, \cite{CC13, CC15} studies $1$-forms that are divergence free in $H^1$ and shows they can be decomposed as harmonic $L^2$ forms and limits in $H^1$ of divergence free compactly supported $1$-forms.  More precisely, consider
\[
\widetilde {\textbf{V}}=\{u\in H^1(\Omega): \dd^\ast u=0\},
\]
and
\be\label{tV}
 \textbf{V} = \overline{\Lambda_{c,\sigma}^1(\Omega)}^{H^1},
\ee
where $\Lambda^1_{c,\sigma}$ denotes, smooth compactly supported and divergence free forms on a domain $\Omega$.
 %these spaces are not the same unless $N=2$. 
  It was observed by Heywood \cite{Heywood} that whether or not these spaces coincide is related to having nonunique solutions to the stationary Stokes and Navier-Stokes equations.  These spaces are for example the same for $\Omega=\R^n$, but in \cite{CC13}, the first two authors showed that these spaces are not the same on a hyperbolic space when $N=2$, and in fact

\be\label{hodge1}
\widetilde {\textbf{V}}=\textbf V \oplus \mathcal H,
\ee
which could explain the non-uniqueness phenomenon presented in \cite{CC10} (see also \cite{KhesinMisiolek, Lichtenfelz}) (when $N\geq 3$, $\widetilde {\textbf{V}}=\tV$ \cite{CC15}).  From the point of view of PDE, the definition of the space $\tV$ as given by \eqref{tV} is convenient to work with, but it can be shown as a corollary to Theorem \ref{Thm1} that $\tV= \overline{\dd^\ast \Lambda^{k+1}_c}^{H^1}$.  We give a \emph{constructive} proof of that fact for $N=2$. 

\begin{thm}\label{Thm2}
Consider $H^1(\mathbb H^2(-a^2))$, then 
\[
 \widetilde {\textbf{V}}= \overline{\Lambda_{c,\sigma}^1}^{H^1}\oplus \mathcal H^1=\overline{\dd^\ast \Lambda^2_c}^{H^1}\oplus \mathcal H^1.
\]
\end{thm}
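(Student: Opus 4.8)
The plan is to take the first equality $\widetilde{\textbf V}=\overline{\Lambda^1_{c,\sigma}}^{H^1}\oplus\mathcal H^1$ as given, since it is precisely \eqref{hodge1} established in \cite{CC13}, and to concentrate all the work on the second equality $\textbf V=\overline{\dd^\ast\Lambda^2_c}^{H^1}$, i.e. $\overline{\Lambda^1_{c,\sigma}}^{H^1}=\overline{\dd^\ast\Lambda^2_c}^{H^1}$. In fact I would prove the stronger, purely constructive statement that on $\mathbb H^2(-a^2)$ the two generating spaces already coincide \emph{before} taking closures,
\be
\Lambda^1_{c,\sigma}=\dd^\ast\Lambda^2_c,
\ee
after which the desired identity follows at once by closing up both sides in $H^1$.

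One inclusion is immediate. If $\omega\in\Lambda^2_c$ then $\dd^\ast\omega$ is smooth, its support is contained in that of $\omega$ and hence is compact, and it is divergence free because $\dd^\ast\dd^\ast=0$; thus $\dd^\ast\Lambda^2_c\subseteq\Lambda^1_{c,\sigma}$.

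For the reverse inclusion I would use the two-dimensional Hodge star $\ast$ together with a stream-function construction. Given $\phi\in\Lambda^1_{c,\sigma}$, the identity $\dd^\ast=-\ast\dd\ast$ on $1$-forms shows that $\dd^\ast\phi=0$ is equivalent to $\dd(\ast\phi)=0$, so $\ast\phi$ is a closed, compactly supported $1$-form. Since $\mathbb H^2(-a^2)$ is simply connected, there is a smooth function $g$ with $\dd g=\ast\phi$, unique up to an additive constant. On the complement of $K:=\supp\phi$ we have $\dd g=0$, so $g$ is locally constant there; fixing the additive constant so that $g\equiv 0$ on the unbounded component of $\mathbb H^2(-a^2)\setminus K$, which is connected, forces $\supp g$ to lie in a bounded set, so that $g\in C^\infty_c$. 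Setting $\omega:=\ast g\in\Lambda^2_c$ and using $\ast\ast=1$ on functions and $\ast\ast=-1$ on $1$-forms, a direct computation gives $\dd^\ast\omega=-\ast\dd(\ast\omega)=-\ast\dd g=-\ast(\ast\phi)=\phi$. Hence $\phi\in\dd^\ast\Lambda^2_c$, completing the equality of the two spaces; taking $H^1$-closures then yields $\textbf V=\overline{\dd^\ast\Lambda^2_c}^{H^1}$, and combining with \eqref{hodge1} proves the theorem.

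I expect the only genuine difficulty to be the claim that $g$ can be chosen with compact support, which is exactly where the hypothesis $N=2$ and the topology of $\mathbb H^2(-a^2)$ enter: simple connectivity guarantees a single-valued potential $g$, while the fact that the complement of a compact set in a plane has a single unbounded component lets us normalize $g$ to vanish there. In higher dimensions neither the reduction to a scalar potential nor this elementary compact-support normalization is available, which is why for general $N$ one instead deduces $\textbf V=\overline{\dd^\ast\Lambda^{k+1}_c}^{H^1}$ as a corollary of Theorem \ref{Thm1} rather than constructively.
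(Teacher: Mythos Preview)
Your proposal is correct and follows essentially the same route as the paper: both arguments reduce to showing that every $\phi\in\Lambda^1_{c,\sigma}$ already lies in $\dd^\ast\Lambda^2_c$ by passing to the closed $1$-form $\ast\phi$, finding a potential $g$ via simple connectivity, normalizing the additive constant so that $g$ vanishes outside a large ball (hence has compact support), and then taking $\omega=\ast g=g\,\Vol$. Your formulation of the equality $\Lambda^1_{c,\sigma}=\dd^\ast\Lambda^2_c$ before taking closures is a clean way to package the same computation the paper performs on each term $v_n$ of an approximating sequence.
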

It follows that the statement of the equation \eqref{hodge1} is a subset of the Hodge decomposition of the space $H^1$ for $1$-forms that are divergence free.  To obtain the full Hodge decomposition for $1$-forms it remains to include the limits of the differentials in the $H^1$ norm.  Hence this article can be viewed as completing this task and moreover extending the Hodge Decomposition to any $k$-form in $H^1$.

Weyl's proof in \cite{Weyl} was for the Helmholtz decomposition of the vector fields  in $L^2(\R^3)$, and relied on the Hilbert space structure of $L^2$.  The application was the study of boundary value problems in potential theory.   We follow the method of Weyl, the method of orthogonal projections, in this article.  We review the proof of the $L^2$ decomposition to motivate what is needed in the $H^1$ case.  In particular, the proof in $H^1$ does not directly follow from the statement of the $L^2$ decomposition even though $H^1$ is a subspace of $L^2$.  This is due to $H^1$ having its own inner product, and not just the $L^2$ inner product.  This is explained more in Section \ref{mainidea}.  The main tool in the proof is the Bochner-Weitzenb\"ock formula for $k$-forms, which is more complicated for $k\neq 1$. However, if we assume constant sectional curvature, then the formula simplifies considerably (See Section \ref{BWPsection}). In addition, we can obtain an explicit estimate of an $\dot H^1$ norm of a harmonic $L^2$ form.

The article is written in an expository manner as the hope  is that it can be readable both to the geometers and PDE theorists.

\subsection{Organization of the paper}
 
In Section \ref{Prelim}, we introduce tools to be used throughout this work. More specifically, we give some definitions from Riemannian geometry, define Sobolev spaces on Riemannian manifolds along with the definitions of weak derivatives.  We also review the Hodge $\star$ operator, $\dd^*$, and the notion of currents.

We give a careful discussion of the  Bochner-Weitzenb\"ock formula in Section \ref{BWPsection}, and in Section \ref{harmon} we show $L^2$ $k$-forms belong to $H^1$.

Section \ref{Hdecomp} is dedidcated to the proof of Theorem \ref{Thm1}.  We begin with the review of the $L^2$ Hodge decomposition.  Finally, Section \ref{ProofThm2} proves Theorem \ref{Thm2}.

 \subsection{Acknowledgements}
We would like to thank Michael Struwe for his observation that significantly simplifies the proof of showing that $L^2$ implies $H^1$ in the case of harmonic $1$-forms that the first two authors had in \cite{CC10}.  One can just integrate by parts instead.  This simple yet insightful observation allows us to give an elegant proof for higher order degree forms as presented in Section \ref{harmon}.
 
 C. H. Chan is partially supported by a grant from the  Ministry of Science and Technology of Taiwan (107-2115-M-009 -013 -MY2). M. Czubak is partially supported by a grant
from the Simons Foundation \# 585745.

\section{Preliminaries}\label{Prelim}
\subsection{Definitions from Riemannian geometry}\label{Mgdef}
Here we establish notation and recall some basic notions from Riemannian geometry.  In the rest of this paper, unless said otherwise, $M$ is used to denote an $N$-dimensional, complete, simply connected Riemannian manifold of constant sectional curvature $-a^2$, without boundary.  By the Cartan-Hadammard theorem, $M$ is non-compact. We let $a\geq 0$, so $M$ could be $\R^N$. 

 Let $g$ be the Riemannian metric on $M$.   The identification of vector fields and $1$-forms is done using the metric, and the so-called musical isomorphisms (lowering/raising indices).  Indeed, if $u$ is a vector field, then we can define a $1$-form $u^\flat$, by 
 \[
 u^\flat(\cdot) = g(u, \cdot).
 \]
If we write $u$ in local coordinates, as $u=u^i \partial_{x^i}$, then $u^\flat=g_{ij}u^j \dd x^i$, where $g_{ij}$ is the $i, j$ entry of the metric $g$ in coordinates, and we sum over repeated indices.  Similarly, if $\omega$ is a $1$-form, then a corresponding vector field is given by $\omega^\sharp$ and defined (implicitly) by
\[
\omega(\cdot)=g(\omega^\sharp, \cdot),
\]
or in coodinates
\[
(\omega^\sharp)^i=g^{ij}\omega_j,
\]
with $(g^{ij})$ being now the inverse of $g$.
We note that in general, we can raise and lower indices for any tensor.

We also need a pointwise inner product for $k$-forms.  By definition, the Riemannian metric $g$ acts on vector fields, but it also induces a metric for $k$-forms. Let $\alpha, \beta$ be two $1$-forms. Then
\[
g(\alpha, \beta)=g(\alpha ^\sharp, \beta ^\sharp),
\]
or if we write in coordinates, then
\[
g(\alpha, \beta)=g^{ij}\alpha_i\beta_j .
\]
For $k$-forms, as well as general covariant $k$-tensors, we have
\[
g(\alpha, \beta)=g^{i_1j_1}g^{i_2 j_2}...g^{i_k j_k}\alpha_{i_1...i_k}\beta_{j_1...j_k}.
\]
Note that for simplicity of notation, we use $g(\cdot, \cdot)$ in all these instances regardless of the type of the input. 

Next, we recall the definition of the Hodge $\star$ operator on forms.  If $v$ is a $k$-form, then $\star v$ is an $(N-k)$-form defined by the following relation
\[
w\wedge \star v =g(w,v)\Vol_M.
\]
 
The $L^2$ scalar product on forms can then be defined by
\be\label{p2b1}
(w,v)=\int_M g(w,v) \Vol_M=\int_M w \wedge \star v.
\ee

We also have for a $k$-form $v$
% \cite[p.101]{DeRhamEng} 
\[
\star \star v=(-1)^{Nk+k}v.
\]
 
In the sequel,  we simply write
\[
\int g(w,v) \quad\mbox{instead of}\quad \int_M g(w,v) \Vol_M.
\]

\subsection{Bochner-Weitzenb\"ock formula}\label{BWPsection}
Recall the Bochner-Weitzenb\"ock formula for $1$-forms relates the Bochner Laplacian, $-\dv \nabla=\nabla^\ast \nabla,$ to the Hodge Laplacian  (see \cite{Taylor_book3})
\be\label{bw1}
\nabla^\ast \nabla \alpha=-\Delta \alpha -\Ric \alpha,
\ee
where $-\Delta$ is the Hodge Laplacian
\[
-\Delta \alpha=\dd \dd^\ast \alpha+ \dd^\ast \dd \alpha,
\]
with 
\be\label{dstardef}
\dd^\ast v=(-1)^{Nk+N+1}\star \dd \star v,
\ee
where $k$ is the degree of $v$, 
and $\Ric$ is the Ricci curvature tensor with one index raised, so $\Ric \alpha$ produces a $1-$form.  More precisely, by definition 
\[
R_{ij}=g^{km} R_{kijm}=R_{kij}^{\quad k},
\]
where $R_{abcd}$ is the Riemann curvature tensor in coordinates.  Then
\[
R^i_j=R^{\;\; i\;\;k}_{k\;j}, 
\]
and the $j$-th coordinate of $\Ric \alpha$ is
\[
(\Ric \alpha)_j=R^l_j \alpha_l=R^{\;\; l\;\;k}_{k\;j}\alpha_l.
\] 
 On a manifold with a constant sectional curvature $-a^2$, this simplifies.  Ricci tensor becomes \cite[Lemma 8.10]{Lee}
\[
R_{ij}=-a^2(N-1)g_{ij},
\]
so $R^{i}_j=-a^2(N-1)\delta^i_j$, and
\[
\Ric \alpha=-a^2(N-1)\alpha.
\]
It follows from \eqref{bw1} that
\be\label{bw2}
\nabla^\ast \nabla \alpha=-\Delta \alpha +a^2(N-1)\alpha.
\ee
For a general $k$-form, one can also relate the Bochner Laplacian to the Hodge Laplacian, but the formula is more complicated.  In coordinates, it is \cite[p.111]{DeRhamEng}
\begin{equation}\label{BWL}
\begin{split}
- \nabla^j\nabla_j \alpha_{i_1...i_k}&=-(\Delta \alpha)_{i_1...i_k}+ \sum_{\nu=1}^k (-1)^\nu R^h_{i_\nu} \alpha_{hi_1\dots \hat i_\nu\dots i_k}\\
&\qquad -2 \sum_{\mu<\nu}^{1\dots k} (-1)^{\mu+\nu} R^{h \ \ \ \ i}_{\ i_\nu i_\mu}\alpha_{ih...\hat{i}_\mu\dots \hat i_\nu\dots i_k} ,
\end{split}
\end{equation}
where $\hat j$ means the index $j$ is not present (we note that we use $-\Delta$ for the Hodge Laplacian as opposed to de Rham, and that following the convention in \cite{Lee}, our curvature tensor is negative of de Rham's.). The terms involving the sums are sometimes referred to as the Weitzenb\"ock curvature.  If $k=1$, \eqref{BWL} becomes \eqref{bw1}.

%\begin{equation}\label{Laplacian}(\Delta \alpha)_{i_1...i_k}
%=- \nabla^j\nabla_j \alpha_{i_1...i_k}+ \sum_{\nu=1}^k (-1)^\nu (\nabla_{i_\nu}\nabla^j - \nabla^j\nabla_{i_\nu})\alpha_{j...\hat{i}_\nu...i_k} ,\end{equation}
Fortunately, if the sectional curvature is constant, \eqref{BWL} can also be simplified. 
% In fact, we have the following lemma, which may very well be known in the literature, but 
% The proof is a straightforward computation, and we include it here for completeness.
\begin{lemma}
Let $k\geq 1$, and $\alpha$ be a smooth $k$-form on $\H$.  Then
\be\label{bwp}
\nabla^\ast \nabla \alpha=-\Delta \alpha +a^2k(N-k) \alpha
\ee
\end{lemma}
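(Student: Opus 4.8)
The plan is to take the general Bochner--Weitzenb\"ock identity \eqref{BWL} and substitute the explicit form of the full Riemann curvature tensor, which on a space of constant sectional curvature is determined by the metric alone. Since $M$ has constant sectional curvature $-a^2$, its curvature tensor is
\[
R_{pqrs}=-a^2\bigl(g_{pr}g_{qs}-g_{ps}g_{qr}\bigr),
\]
and I would first confirm this is consistent with the Ricci identity $R_{ij}=-a^2(N-1)g_{ij}$ used above, by contracting $g^{km}R_{kijm}$. After this substitution the two Weitzenb\"ock curvature sums in \eqref{BWL} become purely algebraic expressions in $g$ and $\alpha$, so the proof reduces to index bookkeeping governed by the antisymmetry of $\alpha$.

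Next I would evaluate the single (Ricci) sum. Raising an index gives $R^h_{i_\nu}=-a^2(N-1)\delta^h_{i_\nu}$, so the $\nu$-th summand forces $h=i_\nu$ and produces $(-1)^\nu\bigl(-a^2(N-1)\bigr)\alpha_{i_\nu i_1\dots\hat{i}_\nu\dots i_k}$. Moving the displaced index $i_\nu$ back into its $\nu$-th slot contributes a further sign $(-1)^{\nu-1}$, so all the signs combine to $+a^2(N-1)$ per term; summing over the $k$ values of $\nu$ yields the contribution $a^2k(N-1)\,\alpha$. When $k=1$ the double sum is empty and this already recovers \eqref{bw2}.

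The heart of the computation is the double sum. Substituting the curvature tensor, the relevant component of $R$ splits as a ``$\delta\delta$'' piece proportional to $\delta^h_{i_\mu}\delta^i_{i_\nu}$ plus a ``$g\,g$'' piece proportional to $g^{hi}g_{i_\nu i_\mu}$. The second piece contracts the symmetric tensor $g^{hi}$ against $\alpha_{ih\dots}$, which is antisymmetric in the slots $i$ and $h$, so it vanishes identically. For the first piece I would set $h=i_\mu$ and $i=i_\nu$, reorder the two displaced indices back into their natural slots (a permutation of sign $(-1)^{\mu+\nu}$, cancelling the explicit $(-1)^{\mu+\nu}$), and sum over the $\binom{k}{2}$ pairs $\mu<\nu$; together with the prefactor $-2$ this gives the contribution $-a^2k(k-1)\,\alpha$.

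Adding the two contributions to $-\Delta\alpha$ and using $\nabla^\ast\nabla=-\nabla^j\nabla_j$ then gives
\[
\nabla^\ast\nabla\alpha=-\Delta\alpha+a^2k(N-1)\alpha-a^2k(k-1)\alpha=-\Delta\alpha+a^2k(N-k)\alpha,
\]
which is \eqref{bwp}. The step I expect to be most delicate is the double sum: one must read off the precise index positions in the Weitzenb\"ock curvature term of \eqref{BWL} correctly, since interchanging the last two indices of $R$ flips its sign by antisymmetry and therefore flips the sign of the whole contribution. The observation that makes everything collapse is the vanishing of the symmetric-against-antisymmetric $g^{hi}$ contraction: it is precisely what forces the Weitzenb\"ock curvature to act as the scalar $a^2k(N-k)$ rather than as a genuinely tensorial operator.
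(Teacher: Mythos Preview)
Your plan is correct and follows essentially the same approach as the paper: both substitute the constant-curvature expressions $R^h_{i_\nu}=-a^2(N-1)\delta^h_{i_\nu}$ and $R^{h\ \ \ i}_{\ i_\nu i_\mu}=-a^2(g^{ih}g_{i_\nu i_\mu}-\delta^h_{i_\mu}\delta^i_{i_\nu})$ into \eqref{BWL}, observe that the $g^{ih}$ contraction against the antisymmetric $\alpha_{ih\dots}$ vanishes, and then reduce each sum by reshuffling indices to obtain $a^2k(N-1)\alpha$ and $-a^2k(k-1)\alpha$ respectively. The only place to be careful, as you note, is the sign bookkeeping in the double sum, and your claimed permutation sign $(-1)^{\mu+\nu}$ for reordering $\alpha_{i_\nu i_\mu i_1\dots\hat{i}_\mu\dots\hat{i}_\nu\dots i_k}$ back to $\alpha_{i_1\dots i_k}$ is correct.
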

\begin{proof}
We work in coordinates. From \cite[Lemma 8.10]{Lee} again we have 
\be\label{ricci1}
R^i_j=-a^2(N-1)\delta^i_j,
\ee
as well as
\be
R_{ijkl}= -a^2(g_{il}g_{jk}-g_{ik}g_{jl}),
\ee
so
\be\label{riemann2}
\begin{split}
R^{h \ \ \ \ i}_{\ i_\nu i_\mu}&=-a^2g^{hi'}g^{il} (g_{i'l}g_{i_\nu i_\mu}-g_{i'i_\mu}g_{i_\nu l})\\
&=-a^2g^{il} (\delta^h_lg_{i_\nu i_\mu}- \delta^h_{i_\mu}g_{i_\nu l})=-a^2(g^{ih}g_{i_\nu i_\mu}- \delta^h_{i_\mu}\delta^i_{i_\nu }).
\end{split}
\ee
We use \eqref{ricci1} in \eqref{BWL} to get that the first sum can be rewritten as
\begin{align*}
 \sum_{\nu=1}^k (-1)^\nu R^h_{i_\nu} \alpha_{hi_1\dots \hat i_\nu\dots i_k}&=-a^2(N-1) \sum_{\nu=1}^k (-1)^\nu \delta^h_{i_\nu} \alpha_{hi_1\dots \hat i_\nu\dots i_k}\\
 &=-a^2(N-1) \sum_{\nu=1}^k (-1)^\nu  \alpha_{i_\nu i_1\dots \hat i_\nu\dots i_k}\\
 &=-a^2(N-1) \sum_{\nu=1}^k (-1)^{2\nu-1}  \alpha_{i_1\dots i_k}\\
 &=a^2(N-1)k \alpha_{i_1\dots i_k},
\end{align*}
where we use the anti-symmetry of $\alpha$ in the third line.  For the second sum we use \eqref{riemann2} to get 
\begin{align*}
 -2 \sum_{\mu<\nu}^{1\dots k} (-1)^{\mu+\nu} R^{h \ \ \ \ i}_{\ i_\nu i_\mu}\alpha_{ih...\hat{i}_\mu\dots \hat i_\nu\dots i_k}=2a^2
 \sum_{\mu<\nu}^{1\dots k} (-1)^{\mu+\nu} (g^{ih}g_{i_\nu i_\mu}- \delta^h_{i_\mu}\delta^i_{i_\nu })\alpha_{ih...\hat{i}_\mu\dots \hat i_\nu\dots i_k}.
 \end{align*}
 We now observe that in the first term, since we are summing with respect to $h$ and $i$ we have by anti-symmetry of $\alpha$ and symmetry of the metric that
 \[
 g^{ih}\alpha_{ih...\hat{i}_\mu\dots \hat i_\nu\dots i_k}=- g^{ih}\alpha_{hi...\hat{i}_\mu\dots \hat i_\nu\dots i_k}=-g^{ih}\alpha_{ih...\hat{i}_\mu\dots \hat i_\nu\dots i_k},
 \]
 so the first term cancels.  We are left with 
 \begin{align*}
 -2a^2 \sum_{\mu<\nu}^{1\dots k} (-1)^{\mu+\nu} \delta^h_{i_\mu}\delta^i_{i_\nu }\alpha_{ih...\hat{i}_\mu\dots \hat i_\nu\dots i_k}
&= 
2a^2 \sum_{\mu<\nu}^{1\dots k} (-1)^{\mu+\nu}\alpha_{i_\mu i_\nu...\hat{i}_\mu\dots \hat i_\nu\dots i_k}\\
 &= 
 2a^2 \sum_{\mu<\nu}^{1\dots k} (-1)^{2\mu+2\nu-1}\alpha_{i_1\dots  i_k}\\
 &= 
- a^2k(k-1)\alpha_{i_1\dots  i_k}.
\end{align*}

\end{proof}

From this we obtain the following corollary that we record here.
\begin{cor}\label{corBW} Let $M$ be a Riemannian manifold of dimension $N$ with a constant sectional curvature $K\in \R$, then if $\alpha$ is a $k$-form we have,
\begin{equation}\label{corBWL}
\begin{split}
  \sum_{\nu=1}^k (-1)^\nu R^h_{i_\nu} \alpha_{hi_1\dots \hat i_\nu\dots i_k} -2 \sum_{\mu<\nu}^{1\dots k} (-1)^{\mu+\nu} R^{h \ \ \ \ i}_{\ i_\nu i_\mu}\alpha_{ih...\hat{i}_\mu\dots \hat i_\nu\dots i_k}=Kk(N-k) \alpha.
\end{split}
\end{equation}
\end{cor}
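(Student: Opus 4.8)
The plan is to recognize that the left-hand side of \eqref{corBWL} is exactly the Weitzenb\"ock curvature term already appearing on the right of \eqref{BWL}, and that the preceding lemma carries out precisely this evaluation in the special case of sectional curvature $-a^2$. Since both sums are pointwise and algebraic---linear in the curvature tensor and in $\alpha$---the entire argument goes through verbatim for an arbitrary constant sectional curvature $K$ once the constant-curvature expressions for the Ricci and Riemann tensors are inserted. So I would simply re-run the computation leading to \eqref{bwp}, with $-a^2$ replaced throughout by $K$.

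Concretely, first I would record, in the normalization used in \eqref{ricci1}--\eqref{riemann2}, that for constant sectional curvature $K$ one has $R^i_j=K(N-1)\delta^i_j$ and $R_{ijkl}=K(g_{il}g_{jk}-g_{ik}g_{jl})$, whence $R^{h\ \ \ \ i}_{\ i_\nu i_\mu}=K\big(g^{ih}g_{i_\nu i_\mu}-\delta^h_{i_\mu}\delta^i_{i_\nu}\big)$. Substituting $R^h_{i_\nu}$ into the first sum, the Kronecker delta relabels the contracted index and the total antisymmetry of $\alpha$ returns it to its slot; each of the $k$ terms contributes identically, producing a multiple of $K(N-1)k\,\alpha$. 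Substituting $R^{h\ \ \ \ i}_{\ i_\nu i_\mu}$ into the second sum, the $g^{ih}$ piece is contracted against $\alpha_{ih\dots}$, which is antisymmetric in $i,h$, so it vanishes; the surviving $\delta^h_{i_\mu}\delta^i_{i_\nu}$ piece, after relabeling and sorting the two restored indices into positions $\mu$ and $\nu$, contributes a multiple of $Kk(k-1)\,\alpha$, there being $\binom{k}{2}$ pairs $\mu<\nu$. Adding the two contributions, the factors $(N-1)k$ and $k(k-1)$ combine to $k(N-k)$, which yields the right-hand side of \eqref{corBWL}.

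The only genuinely delicate point---and the step I would treat most carefully---is the sign bookkeeping. The antisymmetry of $\alpha$ forces the permutation signs $(-1)^{2\nu-1}$ in the first sum and $(-1)^{2\mu+2\nu-1}$ in the second (exactly the factors tracked in the proof of \eqref{bwp}), and it is the interplay of these with the sign of the Riemann tensor fixed by the chosen curvature convention that pins down the overall sign of the $Kk(N-k)\,\alpha$ term. Thus the main obstacle is bookkeeping rather than ideas: once the convention of \eqref{ricci1}--\eqref{riemann2} is adopted and the permutation signs are handled as in the lemma, the corollary follows immediately, with the curvature parameter entering linearly as required.
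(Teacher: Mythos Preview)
Your proposal is correct and is exactly the approach the paper takes: the corollary is stated immediately after the lemma with the remark ``from this we obtain the following corollary,'' and your plan to rerun the lemma's computation verbatim with $-a^2$ replaced by the general constant $K$ (using $R^i_j=K(N-1)\delta^i_j$ and $R_{ijkl}=K(g_{il}g_{jk}-g_{ik}g_{jl})$, then the same antisymmetry cancellations and permutation-sign bookkeeping) is precisely what is intended. Your caution about the sign conventions is well placed, since the only content beyond the lemma is verifying that the linear dependence on the curvature carries the constant through correctly.
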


Another useful Bochner formula is 
\be\label{BochnerLap}
\frac 12\Delta \abs{\alpha}^2=g(\Delta \alpha, \alpha)+g(\nabla \alpha, \nabla \alpha)-a^2k(N-k) g(\alpha,\alpha).
\ee
It follows from Corollary \ref{corBW}, and for example from \cite[Lemma 3.4]{Li_book} .

\subsection{Sobolev space $H^1$ on $\H$} Let $\nabla$ be the Levi-Civita connection on $\H$.  The connection $\nabla$ induces a covariant derivative on any tensor.  If $\alpha$ is a smooth $k$-form, then in particular $\alpha$ is a covariant $k$-tensor, and $\nabla \alpha$ is a $k+1$ covariant tensor.  

We denote by $\nabla^\ast$ the formal adjoint of $\nabla$ defined by
\[
\int g(\alpha, \nabla^\ast \theta) =\int g(\nabla \alpha, \theta),
\]
where $\theta$ is a smooth compactly supported $k+1$ covariant tensor and $\alpha$ is a smooth $k$-form.

We now define weak derivatives.  The definitions are natural generalizations of the Euclidean weak derivatives.
\begin{defn}[Weak $\nabla$] \label{weaknabla}
 Let $\alpha$ be an $L^1_{loc}$ integrable $k$-form, then $\alpha$ is weakly differentiable if there exists some $L^1_{loc}$ covariant $(k+1)$-tensor $\tau$ such that
\begin{equation}
(\alpha, \nabla ^* \theta)=\int g(\alpha, \nabla ^*  \theta ) =\int g(\tau, \theta ) =(\tau, \theta),
\end{equation}
and the above equality holds for any smooth compactly supported covariant $k+1$-tensor $\theta$.
\end{defn}
We can define weak $\dd$ and $\dd^\ast$ in a similar manner.
% we say then $\tau=\nabla\alpha,$ holds weakly.\\ In a similar fashion define $\dd$
\begin{defn}[Weak $\dd$]\label{weakd}
Let $\alpha$ be an $L^1_{loc}$ integrable $k$-form, then $\dd \alpha$ exists in a weak sense if there exists some $L^1_{loc}$  $(k+1)$-form $\tau$ such that
\begin{equation}
(\alpha, \dd ^* \theta) =(\tau, \theta),
\end{equation}
and the above equality holds for any smooth compactly supported $(k+1)$-form $\theta$.
\end{defn}

\begin{defn}[Weak $\dd^\ast$ ]\label{weakdstar}
Let $\alpha$ be an $L^1_{loc}$ integrable $k$-form, then $\dd^\ast \alpha$ exists in a weak sense if there exists some $L^1_{loc}$  $(k-1)$-form $\tau$ such that
\begin{equation}
(\alpha, \dd  \theta) =(\tau, \theta),
\end{equation}
and the above equality holds for any smooth compactly supported $(k-1)$-form $\theta$.
\end{defn}
 
Next we have the inner product  
\begin{equation}
[u, v] = (u, v)+ (\nabla u, \nabla v),
\end{equation}
which induces a norm
\be\label{H1norm}
||u ||=\sqrt{[u ,u]}.
\ee 
With these preparations, the Sobolev space $H^1$ for $\Lambda^k(M)$ is defined as follows.
 
 \begin{defn}[Sobolev space $H^1$ on $k$-forms ]
 $$H^1(\Lambda^k(M))=\overline{\Lambda^k_c(M)}^{H^1},$$
 \end{defn}
 where the closure is taken with respect to the norm given by \eqref{H1norm}, which we from now on denote as 
  $||\cdot ||_{H^1}$.
   
It follows that if $u \in H^1$, then the weak derivative $\nabla u$ exists and belongs to $L^2$.  One can show that using the Bochner-Weitzenb\"ock formula both $\dd u$ and $\dd^\ast u$ exist in a weak sense, and belong to $L^2$.  The proof is exactly the same as in \cite{CC13} except that now we work with $k$-forms instead of $1$-forms.  Therefore, we state it here without proof.

\begin{lemma}\cite[Lemma 2.8]{CC13}\label{importantlemma}
Let $u$ be a $k$-form in $H^1(\H)$. It follows that both weak $\dd u$ and $\dd^\ast u$ exist in the sense of the Definitions \ref{weakd} and \ref{weakdstar}, and belong to $L^2$.  Moreover the following formula holds
\[
\norm{\nabla u}^2_{L^2}=\norm{\dd u}^2_{L^2}+\norm{\dd^\ast u}^2_{L^2}+a^2k(N-k)\norm{u}^2_{L^2}.
\]
\end{lemma}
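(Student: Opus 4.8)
The plan is to first establish the identity for smooth compactly supported forms by a direct integration-by-parts argument using the Bochner--Weitzenb\"ock formula, and then to extend it to all of $H^1$ by a density and completeness argument. The crucial structural feature I would exploit is that the curvature coefficient $a^2k(N-k)$ is \emph{nonnegative} for $0\le k\le N$, which is what converts the pointwise identity into a usable \emph{a priori} estimate rather than merely a formal relation.

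First I would take $\alpha\in\Lambda^k_c(M)$ and apply the simplified Bochner--Weitzenb\"ock formula \eqref{bwp}, writing $-\Delta\alpha=\dd\dd^\ast\alpha+\dd^\ast\dd\alpha$, so that
\[
\nabla^\ast\nabla\alpha=\dd\dd^\ast\alpha+\dd^\ast\dd\alpha+a^2k(N-k)\alpha.
\]
Pairing this with $\alpha$ in $L^2$ and integrating by parts (all boundary terms vanish because $\alpha$ has compact support) gives
\[
\norm{\nabla\alpha}^2_{L^2}=\norm{\dd\alpha}^2_{L^2}+\norm{\dd^\ast\alpha}^2_{L^2}+a^2k(N-k)\norm{\alpha}^2_{L^2},
\]
using the adjoint relations $(\nabla^\ast\nabla\alpha,\alpha)=(\nabla\alpha,\nabla\alpha)$, $(\dd\dd^\ast\alpha,\alpha)=(\dd^\ast\alpha,\dd^\ast\alpha)$, and $(\dd^\ast\dd\alpha,\alpha)=(\dd\alpha,\dd\alpha)$. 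This is the desired formula at the level of test forms.

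Next, since $H^1=\overline{\Lambda^k_c(M)}^{H^1}$, I would pick $\alpha_n\in\Lambda^k_c(M)$ with $\alpha_n\to u$ in $H^1$, so that $(\alpha_n)$ is Cauchy in both $L^2$ and in $\norm{\nabla\cdot}_{L^2}$. Applying the identity above to the differences $\alpha_n-\alpha_m$ and using that every term on the right-hand side is nonnegative, I conclude that $\norm{\dd(\alpha_n-\alpha_m)}_{L^2}$ and $\norm{\dd^\ast(\alpha_n-\alpha_m)}_{L^2}$ are each dominated by $\norm{\nabla(\alpha_n-\alpha_m)}_{L^2}$; hence $(\dd\alpha_n)$ and $(\dd^\ast\alpha_n)$ are Cauchy in $L^2$. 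Let $\psi,\phi\in L^2$ denote their limits. To identify $\psi=\dd u$ and $\phi=\dd^\ast u$ in the weak sense of Definitions \ref{weakd} and \ref{weakdstar}, I would pass to the limit in the identities $(\dd\alpha_n,\theta)=(\alpha_n,\dd^\ast\theta)$ and $(\dd^\ast\alpha_n,\theta)=(\alpha_n,\dd\theta)$, valid for every smooth compactly supported test form $\theta$; the $L^2$ convergences $\alpha_n\to u$, $\dd\alpha_n\to\psi$, $\dd^\ast\alpha_n\to\phi$ yield $(\psi,\theta)=(u,\dd^\ast\theta)$ and $(\phi,\theta)=(u,\dd\theta)$, which are exactly the defining relations. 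Finally, letting $n\to\infty$ in the identity for $\alpha_n$ and using the $L^2$ convergence of each squared norm produces the stated formula for $u$.

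I expect the only genuinely delicate point to be the nonnegativity exploited in the completeness step: the argument that $(\dd\alpha_n)$ and $(\dd^\ast\alpha_n)$ converge relies on the curvature term $a^2k(N-k)$ having the \emph{same sign} as the other terms, which is a consequence of the nonpositive curvature hypothesis ($K=-a^2\le 0$) together with $0\le k\le N$. Were this term negative, one could not bound $\norm{\dd\alpha_n}_{L^2}$ and $\norm{\dd^\ast\alpha_n}_{L^2}$ from the $H^1$ Cauchy property alone, and the conclusion could fail. Everything else is routine integration by parts and limiting, and the endpoint cases $k=0$ and $k=N$ (where $k(N-k)=0$ and one of $\dd u,\dd^\ast u$ vanishes) are consistent with the formula.
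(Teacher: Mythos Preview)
Your proposal is correct. The paper itself does not give a proof of this lemma but states that ``the proof is exactly the same as in \cite{CC13} except that now we work with $k$-forms instead of $1$-forms,'' and your argument---integrating the Bochner--Weitzenb\"ock identity \eqref{bwp} against a compactly supported test form, then passing to $H^1$ by density using the nonnegativity of $a^2k(N-k)$ to control $\norm{\dd\cdot}_{L^2}$ and $\norm{\dd^\ast\cdot}_{L^2}$ by $\norm{\nabla\cdot}_{L^2}$---is precisely that standard approach.
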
 
\begin{remark}
We point out that in the case of $M=\R^3$ and $u$ being a vector field this reduces to the familiar decomposition 
\[
\norm{\nabla u}^2_{L^2}=\norm{\curl u}^2_{L^2}+\norm{\dv u}^2_{L^2},
\]
since $a=0$,  $\star \dd \alpha=\curl \alpha^\sharp $ and $g(\star \alpha, \star \alpha)=g(\alpha, \alpha)=g(\alpha^\sharp, \alpha^\sharp)$.
\end{remark}
\subsection{Currents}\label{currents}
At some point we will be taking more derivatives that will be guaranteed to exist, so we will need distributional derivatives.  This  brings us to the subject of currents.  Currents can be thought of as distributions acting on compactly supported smooth differential forms.
%   A current $T$ is a linear functional on $\Lambda^k_c(M)$. 
More precisely
\begin{defn} [Currents] \cite[p.34]{DeRhamEng} Let $M$ be an $n-$dimensional manifold, and $\Lambda^k_c(M)$ denote smooth $k$-forms that are compactly supported in $M$.  Then the current $T$ is a linear functional on $\Lambda^k_c(M)$, with the action denoted by
\[
T[\phi],\quad \phi\in \Lambda^k_c(M).
\]
\end{defn}
A relevant example is an analog of a function $f \in L^1_{loc}$ giving a rise to a distribution:  if $\alpha$ is a locally integrable $(n-k)$-form, we can introduce
\be\label{Talpha}
T_\alpha[\phi]=\int_M \alpha \wedge \phi.
\ee
Hence, we can write $\alpha[\phi]$ to denote \eqref{Talpha}.

 Since from  \eqref{p2b1}, the scalar product on forms is given by
\be\label{p2b12}
(w,v)=\int_M g(w,v) \Vol_M=\int_M w \wedge \star v ,
\ee
it follows that
\[
(w,v)=T_w[\star v].
\]
Similarly, a scalar product of a current $T$ with a form $v$ can be defined by  \cite[p.102]{DeRhamEng}
\be\label{p2b}
(T,v)=T[\star v].
\ee
Finally, if $v$ is compactly supported, then  we can define distributional derivatives of $T$ by \cite[p.105]{DeRhamEng}
\be\label{p2b2}
(\dd T,v)=(T,\dd^\ast v), \quad (\dd^\ast T,v)=(T,\dd v).
\ee
We note that these formulas also hold if $T=\alpha$, and $\alpha$ is a smooth form.
 
We will also use the following theorem from \cite{DeRhamEng}.
\begin{thm}\cite[Theorem 17']{DeRhamEng}\label{DeRhamHomologous}
The current $T$ is homologous to zero if and only if $T[\phi]=0$ for all closed smooth forms with compact support.
\end{thm}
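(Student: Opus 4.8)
The plan is to prove the two implications separately: the forward direction is immediate from the definitions, while the converse carries all the weight. Write $p$ for the dimension of $T$, so that $T$ acts on compactly supported smooth $p$-forms, and recall that ``$T$ is homologous to zero'' means $T=\bb S$ for some current $S$, i.e. $T[\phi]=S[\dd\phi]$ for every compactly supported smooth $p$-form $\phi$. For necessity, suppose $T=\bb S$; if $\phi$ is closed then $\dd\phi=0$, so $T[\phi]=S[\dd\phi]=S[0]=0$. Moreover, the right-hand condition already forces $T$ to be a cycle, as it must be, since testing against an exact form $\dd\psi$ gives $\bb T[\psi]=T[\dd\psi]=0$.

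For the converse, assume $T[\phi]=0$ for every closed compactly supported smooth $p$-form $\phi$, and seek a current $S$ of dimension $p+1$ with $\bb S=T$. The natural construction is to define $S$ first on the subspace $\dd\Lambda^{p}_c$ of exact $(p+1)$-forms by
\[
S[\dd\phi]:=T[\phi].
\]
This is well defined precisely because of the hypothesis: if $\dd\phi=\dd\phi'$ then $\phi-\phi'$ is a closed compactly supported $p$-form, so $T[\phi-\phi']=0$, and the value $T[\phi]$ depends only on $\dd\phi$. Any continuous extension of this functional to all of $\Lambda^{p+1}_c$ is then a current $S$ with $\bb S=T$, since $\bb S$ is only ever paired with exact forms $\dd\phi$.

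The remaining step --- producing a \emph{continuous} extension --- is where the genuine difficulty lies, and I expect it to be the main obstacle. Two things must be secured: that the functional above is continuous on $\dd\Lambda^{p}_c$ in the natural inductive-limit topology on compactly supported forms, and that it extends continuously to the whole space by Hahn--Banach (which is routine for the locally convex space of test forms once continuity on the subspace is known). The continuity estimate is the crux; it amounts to saying that $\dd\colon\Lambda^{p}_c\to\Lambda^{p+1}_c$ is a topological homomorphism onto its closed range, so that convergence of $\dd\phi_n$ controls $\phi_n$ modulo closed forms and hence controls $T[\phi_n]$. To secure this I would invoke de Rham's regularization operators: a smoothing operator $R$, chain-homotopic to the identity through a relation of the form $R-I=\bb A+A\bb$, replaces the cycle $T$ by a smooth closed form $RT$ in the same homology class (using $\bb T=0$), thereby reducing the statement to the classical fact that a smooth closed form annihilating every closed compactly supported test form is exact. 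This regularization machinery is precisely the technical engine that controls the topology of currents and makes the duality go through.
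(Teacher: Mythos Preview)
The paper does not prove this statement at all: it is quoted verbatim as \cite[Theorem 17']{DeRhamEng} and used as a black box (to deduce the two lemmas about currents of degree $1$ and degree $k$ immediately below it, and later Lemmas \ref{dplemma} and \ref{dstarlemma}). So there is no ``paper's own proof'' to compare against; the authors simply rely on de Rham.

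That said, your sketch is essentially the outline of de Rham's argument in \cite{DeRhamEng}. The forward direction is immediate, as you say. For the converse, defining $S$ on exact forms by $S[\dd\phi]=T[\phi]$ and then extending is the natural move, and you correctly isolate the real issue: continuity of this functional on $\dd\Lambda^p_c$ in the test-form topology, so that Hahn--Banach applies. Your proposed remedy---de Rham's regularization operators $R$ and the homotopy $R-I=\bb A+A\bb$---is exactly what de Rham uses, and the reduction to the smooth case is the right strategy.

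One point to be aware of: the ``classical fact'' you land on at the end, that a smooth closed form annihilating every closed compactly supported test form is exact, is not elementary. It is essentially the de Rham isomorphism combined with Poincar\'e duality (the pairing between $H^{n-p}_{\mathrm{dR}}(M)$ and $H^{p}_{c}(M)$ is nondegenerate), and in de Rham's book this is itself established through the same circle of ideas---regularization, homotopy operators, and a covering/Mayer--Vietoris induction. So while your reduction is valid, you should not regard the endpoint as a cheap fact; it carries comparable depth to the theorem you are proving. If you were writing this up in full you would either need to develop that duality statement or, as the present paper does, defer to \cite{DeRhamEng}.
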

In \cite{CC13}, we unwrapped the definitions to show that in the case of a current of degree $1$, this statement is equivalent to 
(recall $\Lambda^l_{c,\sigma}$ denotes smooth, co-closed and compactly supported $l$-forms)
\begin{lemma}
 Let $T$ be a current of degree $1$.
Then $(T,v)=0$ for all
$v \in \Lambda^1_{c,\sigma}(M)$ if and only if $T=\dd P$ for some $0$ degree current $P$.
\end{lemma}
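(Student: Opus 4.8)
The plan is to deduce the lemma from de Rham's Theorem \ref{DeRhamHomologous}, using the Hodge star to interchange the co-closedness of a $1$-form with the closedness of the dual $(N-1)$-form. There are two directions, and only one of them carries any content.

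First I would dispose of the direction $T=\dd P \Rightarrow (T,v)=0$. If $P$ is a degree $0$ current, then for any $v\in\Lambda^1_{c,\sigma}(M)$ the definition of the distributional exterior derivative \eqref{p2b2} gives $(T,v)=(\dd P,v)=(P,\dd^\ast v)$, and this vanishes because $\dd^\ast v=0$ by co-closedness of $v$. This direction requires no more than unwrapping \eqref{p2b2}.

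For the converse, the key step is the change of test forms $v\mapsto \star v$. Starting from $(T,v)=T[\star v]$, as recorded in \eqref{p2b}, I would rewrite the hypothesis ``$(T,v)=0$ for all $v\in\Lambda^1_{c,\sigma}(M)$'' as ``$T[\star v]=0$ over the same class of $v$.'' Since $\star$ is a pointwise bundle isomorphism it preserves smoothness and compact support, and by the formula \eqref{dstardef} together with the invertibility of $\star$ one has $\dd^\ast v=0$ exactly when $\dd(\star v)=0$. Hence $v\mapsto\star v$ is a bijection of $\Lambda^1_{c,\sigma}(M)$ onto the space of closed, smooth, compactly supported $(N-1)$-forms, and the hypothesis becomes precisely that $T[\phi]=0$ for every closed smooth compactly supported form $\phi$ (necessarily of degree $N-1$, which is the degree on which $T$ acts). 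Theorem \ref{DeRhamHomologous} then yields that $T$ is homologous to zero.

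The step I expect to require the most care is translating ``homologous to zero'' into the statement $T=\dd P$ with $P$ of degree $0$. De Rham formulates homology of currents through the boundary operator, which coincides with the current exterior derivative $\dd$ of \eqref{p2b2} up to a sign that is immaterial for an existence statement; thus $T$ homologous to zero means $T=\dd P$ for some current $P$. A degree count then fixes the degree of $P$: since $\dd$ raises the degree by one and $T$ has degree $1$, the current $P$ must have degree $0$, which completes the proof.
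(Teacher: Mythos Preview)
Your proposal is correct and follows exactly the approach the paper indicates (and attributes to \cite{CC13}): unwrap the pairing via \eqref{p2b}, use that $\star$ takes co-closed $1$-forms bijectively to closed $(N-1)$-forms, and invoke Theorem~\ref{DeRhamHomologous}, with the easy direction handled by \eqref{p2b2}. The care you take in identifying ``homologous to zero'' with $T=\dd P$ up to an absorbable sign, and the degree count forcing $P$ to have degree $0$, are precisely the bookkeeping the paper has in mind.
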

By the same reasoning as in \cite{CC13}, we can show
\begin{lemma}
 Let $T$ be a current of degree $k$.
Then $(T,v)=0$ for all
$v \in \Lambda^k_{c,\sigma}(M)$ if and only if $T=\dd P$ for some $k-1$ degree current $P$.
\end{lemma}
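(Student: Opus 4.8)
The plan is to reduce the statement, via the Hodge star, to de Rham's characterization of currents homologous to zero (Theorem \ref{DeRhamHomologous}), and then to identify ``homologous to zero'' with membership in the image of the distributional exterior derivative, exactly as was done for the degree-one case in \cite{CC13}.

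First I would rewrite the hypothesis. Using $(T,v)=T[\star v]$ from \eqref{p2b}, the condition $(T,v)=0$ for every $v\in\Lambda^k_{c,\sigma}(M)$ becomes $T[\star v]=0$ for every smooth, compactly supported, co-closed $k$-form $v$. By \eqref{dstardef} the condition $\dd^\ast v=0$ is equivalent to $\dd(\star v)=0$, since $\star$ is invertible; hence $v$ is co-closed if and only if $\star v$ is closed. As $v$ runs over $\Lambda^k_{c,\sigma}(M)$ the form $\phi:=\star v$ runs over \emph{all} closed, smooth, compactly supported $(N-k)$-forms (given a closed $\phi$, take $v=\pm\star\phi$ and use $\star\star=\pm 1$). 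Therefore the hypothesis is equivalent to
\[
T[\phi]=0 \quad\text{for all closed }\phi\in\Lambda^{N-k}_c(M),
\]
which is precisely the condition in Theorem \ref{DeRhamHomologous}, applied to the $(N-k)$-forms on which the degree-$k$ current $T$ acts.

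Next I would invoke Theorem \ref{DeRhamHomologous} to conclude that the displayed condition holds if and only if $T$ is homologous to zero, and then unwind ``homologous to zero'' into the statement $T=\dd P$. This is the step carried out for degree-one currents in \cite{CC13}: a current is homologous to zero exactly when it lies in the image of the distributional exterior derivative defined in \eqref{p2b2}. The degree count is consistent, since $\dd$ raises degree by one, so $T=\dd P$ forces $P$ to have degree $k-1$. The reverse implication is immediate and does not need de Rham's theorem: if $T=\dd P$ for a $(k-1)$-degree current $P$, then for any $v\in\Lambda^k_{c,\sigma}(M)$ we have, by \eqref{p2b2},
\[
(T,v)=(\dd P,v)=(P,\dd^\ast v)=0,
\]
because $\dd^\ast v=0$.

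The only genuine subtlety, and the step I would be most careful about, is the identification of de Rham's notion of a current being ``homologous to zero'' with membership in the image of the operator $\dd$ on currents from \eqref{p2b2}, together with the bookkeeping of degrees and signs in the star and duality conventions. Once the degree-one argument of \cite{CC13} is transcribed with $1$ replaced by $k$ and $N-1$ by $N-k$, no new difficulty arises, which is why the result follows by the same reasoning.
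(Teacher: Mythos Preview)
Your proposal is correct and follows exactly the route the paper intends: the paper gives no independent argument here, merely stating that the degree-$k$ case goes ``by the same reasoning as in \cite{CC13}'', and what you have written is precisely that reasoning transcribed to general $k$ via the Hodge star and Theorem~\ref{DeRhamHomologous}.
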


\subsection{The cut-off function and integration by parts} \label{cutoff}
When we integrate/test against anything that has compact support we can use \eqref{p2b2}.  If the integrands do not have compact support, we can multiply one of them by a cut-off function, which we introduce now. First, let
$\phi \in C^{\infty}([0,\infty ))$ satisfy
\begin{itemize}
\item $\chi_{[0,1)} \leq \phi \leq \chi_{[0,2)}$ on $[0,\infty )$, where $\chi_A$ is the characteristic function of the set $A$.
\item $|\phi'| \leq 2$ on $[0,\infty )$.
\end{itemize}
Then, for each $R > 1$, we consider the cut-off function $\phi_R \in C_c^{\infty}(\mathbb{H}^N(-a^2))$ defined by
\begin{equation}
\phi_R (x) = \phi \big ( \frac{\rho(x)}{R}\big ) ,
\end{equation}
where $\rho(x)$ stands for the geodesic distance of $x \in \mathbb{H}^N(-a^2)$ from a preferred reference point $O$ in $\mathbb{H}^N(-a^2)$.

One application will be with the help of the following formula for a vector field $X$ \cite[p.43]{Lee}

\be\label{byparts}
\dv (\phi^2_R X)= \phi^2_R \dv X+ g(\grad  \phi^2_R , X),
\ee
which is the Riemannian analog of the Euclidean formula for a real-valued function $f$, and a vector-valued function $F$
\[
\dv( fF)=f \dv F+\nabla f \cdot F.
\]
Since $\phi_R$ has compact support, when integrated, the left hand side of $\eqref{byparts}$ will go away.

We will also use that since $\abs{\nabla \rho}=1$,
\be\label{dcut}
\abs{\nabla \phi_R}\leq \frac 2R.
\ee

\section{Harmonic $L^2$ k-forms}\label{harmon}
A $k$-form $\alpha$ is harmonic if the Hodge Laplacian of $\alpha$ vanishes. Observe, from the definition of the Hodge Laplacian, that if  $\alpha$ is $\dd$ and $\dd^\ast$ closed, then $\alpha$  must be harmonic. On a compact manifold $M$ with no boundary, the converse can be quickly seen to hold. Indeed, if $-\Delta \alpha =0$, then 
\[
0=\int (-\Delta \alpha, \alpha)=\int_M (\dd \dd^\ast \alpha + \dd^\ast \dd \alpha, \alpha )=\int  (\dd^\ast \alpha, \dd^\ast \alpha)+\int (\dd \alpha, \dd \alpha).
\]
For non-compact manifolds, using cut-off functions, one can extend this result to harmonic forms belonging to $L^2$.  This is the result of Andreotti and Vesentini \cite{AV}.
\begin{thm}\label{AV}
An $L^2$ $k$-form is harmonic if and only if it is $\dd$ and $\dd^\ast$ closed.
\end{thm}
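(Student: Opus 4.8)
The plan is to prove both directions, with the forward (``only if'') direction being where essentially all the work lies. For the easy direction, if $\alpha$ is both $\dd$- and $\dd^\ast$-closed, then directly from the definition of the Hodge Laplacian $-\Delta\alpha=\dd\dd^\ast\alpha+\dd^\ast\dd\alpha=0$, so $\alpha$ is harmonic; this needs no integrability hypothesis at all.

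For the converse, suppose $\alpha\in L^2(\Lambda^k)$ satisfies $-\Delta\alpha=0$ weakly. Since the Hodge Laplacian is elliptic, elliptic regularity first upgrades $\alpha$ to a smooth form, so that $\dd\alpha$ and $\dd^\ast\alpha$ are defined pointwise. I would then mimic the compact computation but insert the cut-off $\phi_R$ from Section \ref{cutoff} to compensate for the non-compactness. Concretely, testing harmonicity against the compactly supported form $\phi_R^2\alpha$ gives
\[
0=\int(-\Delta\alpha,\phi_R^2\alpha)=\int(\dd\dd^\ast\alpha,\phi_R^2\alpha)+\int(\dd^\ast\dd\alpha,\phi_R^2\alpha).
\]
Because $\phi_R^2\alpha$ has compact support, the adjoint relations \eqref{p2b2} apply with no boundary contribution, and after using the Leibniz rules for $\dd$ and $\dd^\ast$ to move the cut-off across, I would arrive at
\[
0=\int\phi_R^2\abs{\dd\alpha}^2+\int\phi_R^2\abs{\dd^\ast\alpha}^2+E_R,
\]
where $E_R$ collects the two error terms produced by differentiating $\phi_R^2$; each is bounded pointwise by $\abs{\grad\phi_R^2}\,\abs{\alpha}$ times $\abs{\dd\alpha}$ or $\abs{\dd^\ast\alpha}$.

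The crux is to show $E_R\to 0$. Using $\abs{\grad\phi_R^2}=2\phi_R\abs{\grad\phi_R}\leq 4/R$ from \eqref{dcut} together with the fact that $\grad\phi_R$ is supported in the geodesic annulus $R\leq\rho\leq 2R$, Young's inequality lets me absorb half of each main term and estimate the remainder:
\[
\abs{E_R}\leq \tfrac12\int\phi_R^2\big(\abs{\dd\alpha}^2+\abs{\dd^\ast\alpha}^2\big)+\frac{C}{R^2}\norm{\alpha}_{L^2}^2.
\]
Combining this with the identity above yields $\int\phi_R^2(\abs{\dd\alpha}^2+\abs{\dd^\ast\alpha}^2)\leq \frac{C}{R^2}\norm{\alpha}_{L^2}^2$. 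Since $\phi_R\equiv 1$ on the ball of radius $R$, restricting the integral to any fixed ball $B$ and letting $R\to\infty$ forces $\int_{B}(\abs{\dd\alpha}^2+\abs{\dd^\ast\alpha}^2)=0$ on every $B$, hence $\dd\alpha=\dd^\ast\alpha=0$.

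The main obstacle is precisely the control of $E_R$: a priori I do not know that $\dd\alpha,\dd^\ast\alpha\in L^2$, so I cannot simply pass to the limit in the cross terms. The device that resolves this is to keep the full weighted quantities on the left, absorb the dangerous half of the main term via Young's inequality, and exploit that the only surviving piece of the error, $\int\abs{\grad\phi_R^2}^2\abs{\alpha}^2$, is controlled by $\norm{\alpha}_{L^2}^2/R^2$ using only the $L^2$ bound on $\alpha$ and the gradient estimate \eqref{dcut}. This simultaneously proves $\dd\alpha=\dd^\ast\alpha=0$ and, as a byproduct, that these forms belong to $L^2$.
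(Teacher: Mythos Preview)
Your argument is correct and is precisely the standard Andreotti--Vesentini cut-off proof. Note, however, that the paper does not supply its own proof of this theorem: it simply cites \cite{AV} and remarks that ``using cut-off functions, one can extend this result to harmonic forms belonging to $L^2$.'' So there is no paper proof to compare against beyond that one-line description, which your proposal faithfully executes. The absorption trick you use (keep $\int\phi_R^2(\abs{\dd\alpha}^2+\abs{\dd^\ast\alpha}^2)$ on the left, Cauchy--Young the cross terms, and bound the remainder by $CR^{-2}\norm{\alpha}_{L^2}^2$) is exactly the mechanism the paper employs elsewhere, for instance in Section~\ref{harmon} and in the proof of Proposition~\ref{incl2}, so your write-up is entirely in the spirit of the surrounding arguments.
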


In \cite{Dodziuk_Sobolev}, Dodziuk has studied the cohomology of $L^2$ forms in the context of the Sobolev spaces.  From the statement of \cite[Proposition 2.2]{Dodziuk_Sobolev} one can deduce that a harmonic $L^2$ form $\alpha$  belongs to the Sobolev space $H^{2m}(M)$ for some integer $m$ satisfying $2m>\frac N2-1$.  The integer $m$ is related to the curvature bounds satisfied by $M$. 
 %show that $\alpha$ is in $H^1(M)$.  
 Since the space form $\H$ satisfies these bounds we have that any $L^2$ harmonic $\frac N2$-form on $\H$ belongs to $H^1$ (for $a>0$, so the statement is nontrivial).  

Here we give an alternate proof of that fact and provide an explicit estimate on the norm.  The proof uses \eqref{BochnerLap}.
%Using we can now show that any $L^2$ harmonic form is actually in $H^1$.  
\begin{thm}
Let $\alpha$ be an $L^2$ harmonic $k$-form, then $\alpha$ is in $H^1$, and
\[
\ \int \abs{\nabla \alpha}^2\leq  2a^2k(N-k) \int   \abs{\alpha}^2 .
\]
\end{thm}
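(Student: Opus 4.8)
The plan is to combine the Bochner identity \eqref{BochnerLap} with an integration-by-parts argument against the cut-off $\phi_R$, following the Struwe observation advertised in the acknowledgements. First I note that since $\alpha$ is an $L^2$ harmonic $k$-form, the Hodge Laplacian of $\alpha$ vanishes, and by elliptic regularity of the Hodge Laplacian $\alpha$ is smooth, so the pointwise formula \eqref{BochnerLap} is valid. Because $\alpha$ is harmonic we have $g(\Delta \alpha,\alpha)=0$, and \eqref{BochnerLap} collapses to
\be
\abs{\nabla \alpha}^2=\tfrac12 \Delta \abs{\alpha}^2 + a^2 k(N-k)\abs{\alpha}^2 .
\ee
I would multiply this identity by $\phi_R^2$ and integrate over $M$, writing $A_R:=\int \phi_R^2\abs{\nabla\alpha}^2$. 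Since $\alpha$ is smooth and $\phi_R$ is compactly supported, $A_R<\infty$ for every $R$; this finiteness is what later makes the absorption step legitimate.

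Next I would handle the scalar-Laplacian term by integration by parts. Writing $\Delta\abs{\alpha}^2=\dv\nabla\abs{\alpha}^2$ and applying \eqref{byparts} with $X=\nabla\abs{\alpha}^2$, the left-hand divergence integrates to zero by compact support of $\phi_R^2$, leaving
\be
\tfrac12\int \phi_R^2\,\Delta\abs{\alpha}^2 = -\int \phi_R\, g\!\left(\nabla\phi_R,\nabla\abs{\alpha}^2\right).
\ee
Using metric compatibility, $\nabla\abs{\alpha}^2=2g(\nabla\alpha,\alpha)$ so that $\abs{\nabla\abs{\alpha}^2}\le 2\abs{\nabla\alpha}\abs{\alpha}$; combining this with the gradient bound \eqref{dcut} and Cauchy--Schwarz gives
\be
\left|\tfrac12\int \phi_R^2\,\Delta\abs{\alpha}^2\right| \le \frac{4}{R}\int \phi_R\abs{\nabla\alpha}\abs{\alpha} \le \frac{4}{R}\,A_R^{1/2}\,\norm{\alpha}_{L^2}.
\ee
Applying Young's inequality to split the right-hand side as $\tfrac12 A_R + \tfrac{8}{R^2}\norm{\alpha}_{L^2}^2$ is precisely what produces the stated constant.

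Assembling the pieces and using $\phi_R\le 1$ on the curvature term, I obtain
\be
A_R \le \tfrac12 A_R + \frac{8}{R^2}\norm{\alpha}_{L^2}^2 + a^2 k(N-k)\norm{\alpha}_{L^2}^2 ,
\ee
and since $A_R$ is finite I may absorb $\tfrac12 A_R$ to the left, yielding the uniform bound $A_R \le \tfrac{16}{R^2}\norm{\alpha}_{L^2}^2 + 2a^2k(N-k)\norm{\alpha}_{L^2}^2$. Letting $R\to\infty$, monotone convergence (or Fatou) gives $\int\abs{\nabla\alpha}^2\le 2a^2k(N-k)\int\abs{\alpha}^2$, which also shows $\nabla\alpha\in L^2$; membership $\alpha\in H^1$ then follows from the routine truncation $\phi_R\alpha\to\alpha$ in $H^1$, noting $\phi_R\alpha\in\Lambda^k_c$. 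The main obstacle, and the point requiring care, is the absorption step: it is legitimate only because $A_R<\infty$ a priori, so one must record the smoothness of $\alpha$ and the compact support of $\phi_R$ before subtracting $\tfrac12 A_R$. I also expect one must be slightly careful that the estimate is uniform in $R$ so that the limit commutes, and to remark that the cruder Young's-inequality split is what gives the factor $2$ (a sharper constant without the factor $2$ is in fact available by instead solving the resulting quadratic inequality for $A_R^{1/2}$, but the stated bound suffices).
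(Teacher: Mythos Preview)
Your proof is correct and follows essentially the same route as the paper's: both multiply the simplified Bochner identity \eqref{BochnerLap} by $\phi_R^2$, integrate by parts via \eqref{byparts}, bound the cross term by $\tfrac12 A_R + \tfrac{8}{R^2}\norm{\alpha}_{L^2}^2$ using Cauchy's inequality and \eqref{dcut}, absorb, and pass to the limit. Your write-up is in fact slightly more careful than the paper's in explicitly recording the smoothness of $\alpha$, the a priori finiteness of $A_R$ needed for the absorption, and the approximation $\phi_R\alpha\to\alpha$ that upgrades the gradient bound to genuine $H^1$ membership.
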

\begin{proof}
If $\alpha$ is harmonic, then \eqref{BochnerLap} simplifies to 
\be\label{BochnerLap2}
\frac 12\dv \nabla \abs{\alpha}^2=\abs{\nabla \alpha}^2-a^2k(N-k) \abs{\alpha}^2.
\ee
Integrating the equation against $\phi_R^2$, which is defined in Section \ref{cutoff}, gives

\begin{equation}\label{integrate}
\int \frac{1}{2}( \dv \nabla \abs{\alpha}^{2} )\phi^{2}_{R} = \int  \phi^{2}_{R}\abs{\nabla \alpha}^2
- a^2k(N-k) \int \phi^{2}_{R} \abs{\alpha}^2.
\end{equation}
We now apply \eqref{byparts} (with $X=\nabla  \abs{\alpha}^{2}$ ) to the left hand side to obtain
\begin{align*}
  \int \frac{1}{2}( \dv \nabla |\alpha|^{2} )\phi^2_{R}(\rho)  &=-\frac 12 \int g(\nabla \abs{\alpha}^2,\nabla \phi^2_R)\\
  &\leq \frac 4{R}\int \abs{\nabla \alpha}\abs{\alpha}\phi_R\\
  &\leq \frac 12\int \abs{\nabla \alpha}^2 \phi_R^2 +\frac 8{R^2} \int\abs{\alpha}^2,
%   &=- \int g(\nabla \alpha,\alpha)\phi_{R}(\rho) \phi'_{R}(\rho)\nabla \rho\\
%  & \leq  \norm{\phi_{R}\nabla \alpha }  \norm{\phi'_R\alpha}\\
%  &\leq \frac 12  ||\overline\nabla \nabla F\psi_{k}(\rho)||^{2}+ \frac 12||\alpha\psi'_{k}(\rho) ||^{2}\\
%   &\leq \frac 12  ||\overline\nabla \nabla F\psi_{k}(\rho)||^{2}+ \frac {2}{2^{2k}}||\nabla F ||^{2}
%\
\end{align*}
by \eqref{dcut}, and Cauchy's inequality.
It follows, the right hand side of \eqref{integrate} is bounded by
\[
\int  \phi^{2}_{R}\abs{\nabla \alpha}^2
- a^2k(N-k) \int \phi^{2}_{R} \abs{\alpha}^2 \leq \frac 12\int \abs{\nabla \alpha}^2 \phi_R^2 +\frac 8{R^2} \int\abs{\alpha}^2.
\]
Rearranging and applying the monotone convergence theorem we get
\[
\frac 12 \int \abs{\nabla \alpha}^2\leq  a^2k(N-k) \int   \abs{\alpha}^2,
\]
as needed.

\end{proof}

\section{Hodge Decomposition for general $k$-forms}\label{Hdecomp}

\subsection{Idea of the proof and review of the $L^2$ Hodge Decomposition}\label{mainidea}
We explain the idea of the proof of the $L^2$ Hodge decomposition to motivate the proof we employ for $H^1$.  We follow the presentation in \cite{DeRhamEng} and provide more details.  The main idea  is to use that $L^2$ is a Hilbert space so if we consider the space
\[
X=\overline{\dd\Lambda^{k-1}_c}^{L^2}\oplus \overline{\dd^\ast \Lambda^{k+1}_c}^{L^2},
\]
by definition, this space is closed in $L^2$, so 
\[
L^2=X\oplus X^\perp.
\]
If we can show $X^\perp=\mathcal H^k$, where
again $\mathcal H^k$ denotes the harmonic $L^2$ $k$-forms on the manifold, then the statement of the $L^2$ Hodge decomposition follows. 

  Step 1 is to show that harmonic forms are contained in $X^\perp$.  Step 2 is  to show that the containment holds the other way.  By  Theorem \ref{AV}, a form $\alpha$ in $L^2$ is both closed ($\dd \alpha=0$, irrotational) and co-closed ($\dd^\ast \alpha=0$, divergence free) if and only if it is harmonic.  So if we consider the inner product of a closed form $\alpha$ with $\phi \in 
\dd^\ast \Lambda^{k-1}_c$, then by \eqref{p2b2}
\be\label{l2e2}
(\alpha, \dd^\ast \phi)=(\dd \alpha, \phi)=0,
\ee
since $\alpha$ is closed.  By taking a limit in $L^2$, this can show $\alpha$ is orthogonal to $\overline{\dd^\ast\Lambda^{k+1}_c}^{L^2}$.  One can do a similar proof using $\dd^\ast \alpha=0$ to show $\alpha$ is orthogonal to $\overline{\dd \Lambda^{k-1}_c}^{L^2}$.  This is the idea of Step 1.  For Step 2, we suppose $\alpha \in X^\perp$, and we would like  to show it is both closed and co-closed.  This is done by first considering the inner product of $\dd \alpha$ against $\phi$ that is compactly supported.  So again, by \eqref{p2b2},
\be\label{l2e1}
(\dd \alpha, \phi)=(\alpha, \dd^\ast \phi)=0,
\ee
since $\alpha$ is orthogonal to $X$, it is orthogonal to $\dd^\ast \phi \in \dd^\ast \Lambda^{k+1}_c$.  Because \eqref{l2e1} holds for all compactly supported forms, we have $\dd \alpha=0$ as needed.  One can do a similar computation for $\dd^\ast \alpha$ to show it is equal to zero.  This completes the main idea of the proof in the $L^2$ case.

When we are dealing with $H^1$, even Step 1 is not as quick.  This is because now we have to consider
\[
(\alpha, \dd^\ast \phi)+(\nabla \alpha, \nabla \dd^\ast \phi)
\]
instead of \eqref{l2e2}.  The first term can be handled as before, but we still need to treat the second term.  This is done with the aid of the Bochner-Weitzenb\"ock formula \eqref{bwp}.  We are now ready to begin the proof for $H^1$.

\subsection{Proof of Theorem \ref{Thm1}}

We define the following space
\[
X=\overline{\dd\Lambda^{k-1}_c}^{H^1}\oplus \overline{\dd^\ast \Lambda^{k+1}_c}^{H^1}.
\]
Then $X$ is a complete subspace of the Hilbert space $H^1,$ so it follows that
\[
H^1(\Lambda^k(M))=X \oplus X^\perp,
\]
where $X^\perp$ is defined with respect to the inner product 
\begin{equation}
[u, v] = (u, v)+ (\nabla u, \nabla v).
\end{equation}
The goal is to show$X^\perp=\mathcal H^k$. First we show

\begin{lemma}\label{directsum}
\[
\overline{\dd\Lambda^{k-1}_c}^{H^1}\perp \overline{\dd^\ast \Lambda^{k+1}_c}^{H^1}
\]
\end{lemma}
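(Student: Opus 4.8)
The plan is to exploit the continuity of the $H^1$ inner product $[\cdot,\cdot]$ together with the density of the generating sets. Since $\dd\Lambda^{k-1}_c$ is dense in $\overline{\dd\Lambda^{k-1}_c}^{H^1}$ and $\dd^\ast\Lambda^{k+1}_c$ is dense in $\overline{\dd^\ast\Lambda^{k+1}_c}^{H^1}$, it suffices to verify that $[\dd\phi,\dd^\ast\psi]=0$ for all $\phi\in\Lambda^{k-1}_c$ and all $\psi\in\Lambda^{k+1}_c$; the orthogonality of the full closures then follows by passing to limits in each argument separately and using that $[\cdot,\cdot]$ is continuous in both slots. Writing out the definition of the inner product, I would split
\[
[\dd\phi,\dd^\ast\psi]=(\dd\phi,\dd^\ast\psi)+(\nabla\dd\phi,\nabla\dd^\ast\psi),
\]
and treat the two pieces separately.

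The $L^2$ term is immediate. Because $\phi$ and $\psi$ are smooth with compact support, so are $\dd\phi$ and $\dd^\ast\psi$, and the integration by parts encoded in \eqref{p2b2} is legitimate; thus $(\dd\phi,\dd^\ast\psi)=(\dd\dd\phi,\psi)=0$ since $\dd\dd=0$. The gradient term $(\nabla\dd\phi,\nabla\dd^\ast\psi)$ is where the real work lies, and I expect it to be the main obstacle, since unlike the $L^2$ case it cannot be dispatched by the nilpotency of $\dd$ alone. My plan is to move one covariant derivative across using the formal adjoint $\nabla^\ast$ (again legitimate, as every tensor in sight is smooth and compactly supported), obtaining $(\nabla\dd\phi,\nabla\dd^\ast\psi)=(\dd\phi,\nabla^\ast\nabla\dd^\ast\psi)$. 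I then apply the simplified Bochner-Weitzenb\"ock formula \eqref{bwp} to the smooth $k$-form $\dd^\ast\psi$,
\[
\nabla^\ast\nabla\dd^\ast\psi=-\Delta\dd^\ast\psi+a^2k(N-k)\dd^\ast\psi,
\]
which converts the second-order Bochner operator into the Hodge Laplacian plus a harmless zeroth-order term.

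The zeroth-order contribution is $a^2k(N-k)(\dd\phi,\dd^\ast\psi)$, which vanishes by the $L^2$ computation already done. For the Hodge-Laplacian contribution I would expand $-\Delta\dd^\ast\psi=(\dd\dd^\ast+\dd^\ast\dd)\dd^\ast\psi=\dd^\ast\dd\dd^\ast\psi$, where the term $\dd\dd^\ast\dd^\ast\psi$ drops out because $\dd^\ast\dd^\ast=0$. Pairing against $\dd\phi$ and integrating by parts once more gives $(\dd\phi,\dd^\ast(\dd\dd^\ast\psi))=(\dd\dd\phi,\dd\dd^\ast\psi)=0$, again by $\dd\dd=0$. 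Hence both pieces vanish and $[\dd\phi,\dd^\ast\psi]=0$, completing the reduction. The only points demanding care are the two integrations by parts, which are justified by compact support, and the applicability of \eqref{bwp}, which is stated for smooth forms and so applies to $\dd^\ast\psi$; the essential new ingredient compared with the $L^2$ argument is precisely the Bochner-Weitzenb\"ock identity, which is what lets the gradient term be rewritten in terms of $\dd$ and $\dd^\ast$ so that their nilpotency can finish the proof.
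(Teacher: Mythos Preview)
Your proof is correct and follows essentially the same approach as the paper: reduce to smooth compactly supported representatives by density and continuity of $[\cdot,\cdot]$, kill the $L^2$ term via $\dd\dd=0$, and handle the gradient term by moving $\nabla$ across and applying the Bochner--Weitzenb\"ock formula \eqref{bwp} together with $\dd\dd=0$ and $\dd^\ast\dd^\ast=0$. The only cosmetic difference is that you expand $-\Delta\dd^\ast\psi$ explicitly before pairing, whereas the paper writes $(\dd\phi,\dd^\ast\dd\dd^\ast\omega)$ directly; the substance is identical.
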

\begin{proof}
We first observe that if $\dd \phi \in \dd\Lambda^{k-1}_c $ and $\dd^\ast \omega \in \dd^\ast \Lambda^{k+1}_c$, then
\[
[\dd \phi , \dd^\ast \omega]=(\dd \phi , \dd^\ast \omega)+(\nabla \dd \phi ,\nabla \dd^\ast \omega)=(\nabla \dd \phi ,\nabla \dd^\ast \omega),
\]
since by \eqref{p2b2}  and $\dd \dd =0$,
\[
(\dd \phi , \dd^\ast \omega)=(\dd \dd \phi ,  \omega)=0.
\]
Next by \eqref{bwp}, again \eqref{p2b2} and $\dd^\ast \dd^\ast=0$,
\[
(\nabla \dd \phi ,\nabla \dd^\ast \omega)=( \dd \phi ,\nabla ^\ast \nabla \dd^\ast \omega)= ( \dd \phi ,\dd^\ast \dd \dd^\ast \omega)+a^2k(N-k)(\dd \phi, \dd^\ast \omega)=0.
\]

Now let $u \in \overline{\dd\Lambda^{k-1}_c}^{H^1}$ and $v\in \overline{\dd^\ast \Lambda^{k+1}_c}^{H^1}$, then
\begin{align}
u &= \lim_{n\rightarrow \infty} \dd \phi_n \ \ \mbox{for some sequence} \{\dd \phi_n \} \subset \dd\Lambda^{k-1}_c,\\
 v&= \lim_{n\rightarrow \infty} \dd^\ast \omega_n \ \mbox{for some sequence} \{\dd^\ast \omega_n \} \subset \dd^\ast \Lambda^{k+1}_c,
\end{align}
so by the continuity of the inner product
\[
[u, v]=\lim_{n\rightarrow \infty} [ \dd \phi_n, \dd^\ast \omega_n  ]=0
\]

\end{proof}

\begin{lemma}\label{incl1a}
\[
\mathcal H^k \subset X^\perp= (\overline{\dd\Lambda^{k-1}_c}^{H^1}\oplus \overline{\dd^\ast \Lambda^{k+1}_c}^{H^1})^\perp
\]
\end{lemma}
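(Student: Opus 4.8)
The plan is to use the continuity of the $H^1$ inner product $[\cdot,\cdot]$ to reduce the claim to checking orthogonality against the generators of the two summands. Since $[\cdot,\cdot]$ is continuous and elements of the two closures are $H^1$-limits of the compactly supported data, it suffices to show that a harmonic $\alpha \in \mathcal H^k$ satisfies $[\alpha, \dd\phi] = 0$ for every $\phi \in \Lambda^{k-1}_c$ and $[\alpha, \dd^\ast\omega] = 0$ for every $\omega \in \Lambda^{k+1}_c$; the passage to the closures then follows exactly as in the limit argument at the end of the proof of Lemma \ref{directsum}. Two facts set up the computation: by Theorem \ref{AV} a harmonic $L^2$ form is both closed and co-closed, so $\dd\alpha = 0$ and $\dd^\ast\alpha = 0$; and by the results of Section \ref{harmon} such an $\alpha$ in fact lies in $H^1$, so the pairing $[\alpha,\cdot]$ is defined.

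For the first orthogonality I would expand
\[
[\alpha, \dd\phi] = (\alpha, \dd\phi) + (\nabla\alpha, \nabla\dd\phi).
\]
The $L^2$ term vanishes immediately: by \eqref{p2b2}, $(\alpha, \dd\phi) = (\dd^\ast\alpha, \phi) = 0$. For the gradient term, since $\dd\phi$ is smooth and compactly supported I can move $\nabla$ onto it via the adjoint $\nabla^\ast$ and then invoke the simplified Bochner--Weitzenb\"ock formula \eqref{bwp}, writing $\nabla^\ast\nabla\dd\phi = -\Delta\dd\phi + a^2k(N-k)\dd\phi$. Because $\dd\dd = 0$, the Hodge Laplacian collapses to $-\Delta\dd\phi = \dd\dd^\ast\dd\phi$, and then \eqref{p2b2} gives $(\alpha, \dd\dd^\ast\dd\phi) = (\dd^\ast\alpha, \dd^\ast\dd\phi) = 0$, while the curvature term is $a^2k(N-k)(\alpha,\dd\phi) = 0$ as above. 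Hence $[\alpha,\dd\phi] = 0$.

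The second orthogonality is handled symmetrically. Expanding $[\alpha,\dd^\ast\omega] = (\alpha,\dd^\ast\omega) + (\nabla\alpha, \nabla\dd^\ast\omega)$, the $L^2$ term is $(\dd\alpha,\omega) = 0$ since $\alpha$ is closed. For the gradient term I again transfer $\nabla$ onto the compactly supported $\dd^\ast\omega$ and apply \eqref{bwp}; now $\dd^\ast\dd^\ast = 0$ forces $-\Delta\dd^\ast\omega = \dd^\ast\dd\dd^\ast\omega$, so $(\alpha, \dd^\ast\dd\dd^\ast\omega) = (\dd\alpha, \dd\dd^\ast\omega) = 0$, and the curvature term vanishes as before. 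Passing to limits via the continuity of $[\cdot,\cdot]$ then yields $\mathcal H^k \perp \overline{\dd\Lambda^{k-1}_c}^{H^1}$ and $\mathcal H^k \perp \overline{\dd^\ast\Lambda^{k+1}_c}^{H^1}$, which is the claimed inclusion.

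I expect the only real obstacle to be the gradient term $(\nabla\alpha,\nabla\cdot)$, which has no analog in the $L^2$ argument sketched in Section \ref{mainidea}: it is precisely here that the Bochner--Weitzenb\"ock identity \eqref{bwp} is indispensable, since it converts the second-order operator $\nabla^\ast\nabla$ into the Hodge Laplacian plus a curvature multiple, both of which are annihilated once the derivatives are routed onto $\alpha$ using $\dd\alpha = \dd^\ast\alpha = 0$. The integration by parts that transfers $\nabla$ is legitimate because $\phi$ and $\omega$ are smooth and compactly supported, so no boundary or decay issues arise, and no second derivative of the merely-$H^1$ form $\alpha$ is ever required.
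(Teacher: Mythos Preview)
Your proof is correct and follows essentially the same route as the paper: split $[\,\cdot\,,\,\cdot\,]$ into its $L^2$ and gradient pieces, dispatch the $L^2$ piece using $\dd\alpha=\dd^\ast\alpha=0$, and handle the gradient piece via the Bochner--Weitzenb\"ock identity \eqref{bwp}. The only difference is where Bochner is applied: the paper moves $\nabla$ onto the harmonic form and uses $\nabla^\ast\nabla h = a^2k(N-k)h$ directly (since $-\Delta h=0$), whereas you move everything onto the compactly supported test data $\dd\phi$ or $\dd^\ast\omega$ and then route the resulting $\dd$ or $\dd^\ast$ back to $\alpha$. Your choice is arguably a bit cleaner, since it never needs smoothness or second derivatives of $\alpha$, but the underlying mechanism is identical.
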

\begin{proof}
We  prove first that if $h$ is a harmonic $L^2$ $k$-form, then it is orthogonal to $\overline{\dd\Lambda^{k-1}_c}^{H^1}.$  
Let $v \in \overline{\dd\Lambda^{k-1}_c}^{H^1}$,  then there exists some sequence $\{\alpha_n \}\subset \Lambda^{k-1}_c$ such that $v=\lim_{n \rightarrow \infty} \dd \alpha_n$, where the limit holds in $H^1$.  It follows that
\[
[h, v]=\lim_{n \rightarrow \infty}[h, \dd \alpha_n]=\lim_{n \rightarrow \infty}\{ (h,\dd \alpha_n)+(\nabla h ,\nabla \dd \alpha_n)\}.
\]
Then for fixed $n$
\be\label{YF}
\begin{split}
[h, \dd \alpha_n] =(h,\dd \alpha_n)+(\nabla h ,\nabla \dd \alpha_n)&= (\dd^\ast h, \alpha_n)+(\nabla^\ast \nabla h ,  \dd \alpha_n)\\
 &= (\nabla^\ast \nabla h ,  \dd \alpha_n),
 \end{split}
 \ee
 since $h$ is $\dd^\ast$-closed by Theorem \ref{AV}.  Continuing and using \eqref{bwp} 
 we have
 \be\label{YF2}
\begin{split}
[h, \dd \alpha_n] &= a^2k(N-k) ( h , \dd \alpha_n)\\
 &= a^2k(N-k)( \dd^\ast h , \alpha_n)\\
 &=0.
 \end{split}
\ee
using again that $h$ is $\dd^\ast$-closed.  Taking the limit in \eqref{YF2} then implies 
\[
[h, v]=0.
\]
Next we show  $\mathcal H^k\subset \overline{\dd^\ast \Lambda^{k+1}_c}^{H^1}$.  Similarly as above, it is enough to show
\[
[h, \dd^\ast \omega]=0,
\]
for $ \omega \in \Lambda^{k+1}_c$.  To that end
\be
\begin{split}
[h, \dd^\ast \omega]=(h,\dd^\ast \omega)+(\nabla h ,\nabla \dd ^\ast\omega)=(\dd h,\omega)+(\nabla h ,\nabla \dd ^\ast\omega)
 &= (\nabla^\ast \nabla h ,  \dd^\ast \omega),
 \end{split}
 \ee
 since $h$ is $\dd$-closed by Theorem \ref{AV}.  Using again  \eqref{bwp} we get
\[
[h, \dd^\ast \omega]= a^2k(N-k) ( h ,\dd^\ast \omega)=  a^2k(N-k)( \dd h , \omega)=0
\]
as needed.

 \end{proof}

Hence we need to show
 \begin{prop}\label{incl2}
\be
X^\perp \subset \mathcal H^k.
\ee
\end{prop}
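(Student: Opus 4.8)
The plan is to take $\alpha \in X^\perp$ and show it is both $\dd$-closed and $\dd^\ast$-closed, so that by Theorem \ref{AV} it is harmonic (the $L^2$ membership is immediate since $\alpha \in H^1 \subset L^2$). This mirrors Step 2 of the $L^2$ argument reviewed in Section \ref{mainidea}, but now the orthogonality is with respect to the full $H^1$ inner product $[\cdot,\cdot]$ rather than the $L^2$ inner product $(\cdot,\cdot)$, so extra terms involving $\nabla$ will appear and must be absorbed. The key mechanism converting the $\nabla$-terms back into $\dd,\dd^\ast$ expressions is the Bochner--Weitzenb\"ock identity \eqref{bwp}, exactly as it was in Lemma \ref{directsum} and Lemma \ref{incl1a}.

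First I would test $\alpha$ against an arbitrary $\dd^\ast \omega$ with $\omega \in \Lambda^{k+1}_c$. Since $\dd^\ast\omega \in \overline{\dd^\ast\Lambda^{k+1}_c}^{H^1} \subset X$, orthogonality gives
\[
0 = [\alpha, \dd^\ast\omega] = (\alpha,\dd^\ast\omega) + (\nabla\alpha, \nabla\dd^\ast\omega).
\]
The plan is to move the connection off $\alpha$: the second term equals $(\alpha, \nabla^\ast\nabla\, \dd^\ast\omega)$, and then \eqref{bwp} rewrites $\nabla^\ast\nabla\,\dd^\ast\omega = -\Delta\,\dd^\ast\omega + a^2k(N-k)\dd^\ast\omega = \dd^\ast\dd\dd^\ast\omega + a^2k(N-k)\dd^\ast\omega$, using $\dd^\ast\dd^\ast = 0$ in the Hodge Laplacian. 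Both resulting pieces are again of the form $(\alpha, \dd^\ast(\cdot))$, hence equal $(\dd\alpha, \cdot)$ after integrating by parts via \eqref{p2b2}. Combining, I expect to arrive at an identity of the shape $(\dd\alpha, \dd\dd^\ast\omega + c\,\omega) = 0$ for all $\omega \in \Lambda^{k+1}_c$, together with the analogous computation testing against $\dd\phi$, $\phi \in \Lambda^{k-1}_c$, which by the symmetric argument (using $\dd\dd=0$) yields $(\dd^\ast\alpha, \text{stuff}) = 0$.

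The main obstacle is that these identities do not immediately give $\dd\alpha = 0$ and $\dd^\ast\alpha = 0$: the forms $\dd\dd^\ast\omega + c\,\omega$ as $\omega$ ranges over $\Lambda^{k+1}_c$ need not exhaust a large enough test space to conclude $\dd\alpha = 0$ directly, because of the extra $\dd\dd^\ast$ coupling. The cleanest route I anticipate is to first establish the two relations $(\dd\alpha,\dd\dd^\ast\omega) = -a^2k(N-k)(\dd\alpha,\omega)$ and its $\dd^\ast$-counterpart, and then feed them into a single combined computation: setting $u=\dd\alpha$ and $v=\dd^\ast\alpha$, apply Lemma \ref{importantlemma} or the integration-by-parts identity \eqref{byparts} with the cut-off $\phi_R$ from Section \ref{cutoff} to control boundary contributions at infinity, and derive that $\|\dd\alpha\|_{L^2}^2 + \|\dd^\ast\alpha\|_{L^2}^2$ is bounded by a quantity that vanishes. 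Concretely, I would test $\dd\alpha$ against $\phi_R^2\,\dd\alpha$-type objects, using that $\alpha \in H^1$ guarantees $\dd\alpha, \dd^\ast\alpha \in L^2$ by Lemma \ref{importantlemma}, and let $R\to\infty$ so the cut-off error terms, controlled by \eqref{dcut}, disappear by monotone/dominated convergence. The delicate point is ensuring the test forms are admissible (compactly supported and smooth, or limits thereof in $H^1$) and that no uncontrolled term at infinity survives; this is precisely where the constant-curvature simplification of \eqref{bwp} and the decay afforded by $\phi_R$ do the work, converting the weak orthogonality into the pointwise closedness $\dd\alpha = \dd^\ast\alpha = 0$, whence $\alpha \in \mathcal H^k$ by Theorem \ref{AV}.
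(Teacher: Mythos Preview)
Your outline follows essentially the same route as the paper's proof: test $\alpha \in X^\perp$ against $\dd\phi$ and $\dd^\ast\omega$, unpack the $H^1$ orthogonality via Bochner--Weitzenb\"ock \eqref{bwp}, arrive at a distributional equation for $\dd\alpha$ (respectively $\dd^\ast\alpha$), and close by testing against a cut-off version of $\dd\alpha$ itself and sending $R\to\infty$. The overall architecture matches, and the identity you derive, $(\dd\alpha,\dd\dd^\ast\omega)+(1+a^2k(N-k))(\dd\alpha,\omega)=0$ for all $\omega\in\Lambda^{k+1}_c$ (your stated constant dropped the $+1$), is exactly what the paper obtains before rewriting it as the current equation $\dd\dd^\ast(\dd\alpha)+(a^2k(N-k)+1)\,\dd\alpha=0$.

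There is, however, one genuine gap. You plan to test $\dd\alpha$ against $\phi_R^2\,\dd\alpha$-type objects, but at that stage $\dd\alpha$ is only known to lie in $L^2$ (via Lemma~\ref{importantlemma}), while the equation above is an identity of currents and hence pairs only against \emph{smooth} compactly supported forms. The form $\phi_R^2\,\dd\alpha$ is compactly supported but not, a priori, smooth, so the pairing $(\dd\dd^\ast\dd\alpha,\,\phi_R^2\,\dd\alpha)$ is formal. The paper closes this by observing that the current equation reads $-\Delta(\dd\alpha)+c\,\dd\alpha=0$ with $c=a^2k(N-k)+1>0$ and then invokes \emph{interior elliptic regularity} to conclude that $\dd\alpha$ is in fact a smooth $(k+1)$-form; only then is $\phi_R^2\,\dd\alpha$ a legitimate test form and the integration by parts plus Cauchy inequality go through. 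You attribute the resolution of the admissibility issue to the constant-curvature simplification of \eqref{bwp} and the decay \eqref{dcut} of $\nabla\phi_R$, but neither of those supplies the missing regularity of $\dd\alpha$; elliptic regularity (or an explicit approximation argument justifying the limit, which would itself need care since $\dd^\ast\dd\alpha$ is only distributional) is the step your plan lacks.
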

\begin{proof}
Let $u \in X^\perp\subset H^1$.  We show $u$ is a harmonic $k$-form by showing $$\dd u =0= \dd^\ast u.$$  Then $u$ must be harmonic by Theorem $\ref{AV}$.

We proceed as follows.
First, let $\theta$ be a smooth, compactly supported $k-1$ form, and consider $(\dd^\ast u, \theta ).$ Note that we could view $\dd^\ast u$ as a distributional derivative, but in this case, since $u \in H^1$, by Lemma \ref{importantlemma}, this is actually a weak $\dd^\ast$, so by Definition \ref{weakdstar} 
\be\label{dstar1}
(\dd^\ast u, \theta ) =(u, \dd\theta),
\ee
since $\theta$ has compact support.  Next,  observe $\dd\theta \in X$, which means
\[
[u,\dd \theta] = 0.
\]
So by the definition of the inner product,  \eqref{dstar1}, and Definition \ref{weaknabla} of the weak covariant derivative $\nabla u$, we get
 \be\label{dstar2}
 (\dd^\ast u, \theta )=-(\nabla u, \nabla d \theta)=-( u, \nabla^\ast \nabla \dd \theta).
 \ee
 Then  by \eqref{bwp}
 \be 
\nabla^*\nabla \dd\theta = \dd \dd^* \dd \theta + a^2 k(N-k) \dd\theta.
\ee
Plugging into \eqref{dstar2}, we obtain
\be
(\dd^\ast u, \theta )=-( u, \dd \dd^* \dd \theta) - a^2 k(N-k)(u, \dd\theta).
\ee
We apply now \eqref{p2b2} to deduce  
$\dd^\ast \dd \dd^\ast  u+(a^2 k(N-k)+1)\dd^\ast u$, as a current, when tested against a compactly supported smooth form gives $0$.  This means
\be\label{dstareq}
\dd^\ast \dd \;\dd^\ast  u+(a^2 k(N-k)+1)\dd^\ast u=0,
\ee
in a sense of currents (distributions).  Moreover, if we let $h=\dd^\ast u$, then \eqref{dstareq} becomes
\be\label{dstareq2}
\dd^\ast \dd h+(a^2 k(N-k)+1)h=0,
\ee and by definition of $h$,  $-\Delta h=\dd^\ast \dd h $.  It follows, $h$ solves an elliptic equation  
\[
-\Delta h+(a^2k(N-k)+1)h=0,
\]
so by elliptic regularity, $h$ is in fact a smooth $k-1$ form.  We now test \eqref{dstareq2} against $\phi_R^2 h$, and we get
\be\label{eq1}
(\dd^\ast \dd h, \phi_R^2 h)+(a^2 k(N-k)+1)(h, \phi_R^2 h)=0.
\ee
We integrate by parts the first term to obtain
\be\label{eq2}
( \dd h, \dd \phi^2_R \wedge h )+( \dd h, \phi^2_R\dd h)=2( \dd h, \phi_R\dd \phi_R\wedge h)+( \dd h, \phi^2_R\dd h).
\ee
Combining \eqref{eq1} and \eqref{eq2}, we have
\be\label{eqlast}
 ( \dd h, \phi^2_R\dd h)+(a^2 k(N-k)+1)(h, \phi_R^2 h)\leq \left| 2( \dd h, \phi_R\dd \phi_R \wedge h)\right|,
\ee
and Cauchy's inequality when applied to the right hand side gives us 
\[ \left| 2( \dd h, \phi_R\dd \phi_R\wedge h)\right|\leq \frac 1 2 \int \phi^2_R| \dd h|^2 + \frac {8}{R^2}\int |h|^2,
\] where we used $\abs{\nabla \phi_R}=|\dd\phi_R |\leq 2/R$ by \eqref{dcut}. Inserting this into \eqref{eqlast} we obtain

\[
\frac 12  \int\phi_R^2\abs{\dd h}^2+(a^2 k(N-k)+1)\int \phi_R^2 \abs{h}^2 \leq \frac {8}{R^2}\int \abs{h}^2.
\]
Now, by Lemma \ref{importantlemma}, $\int\abs{h}^2=\int\abs{\dd^\ast u}^2< \infty$ so by letting $R\rightarrow\infty$, the right hand side goes to zero, and gives
 us \[  \norm{h}_{L^2}= 0,
\]
(and $\norm{\dd h}=0$) as needed.

Next we show $\dd u=0$.  Similarly, let $\alpha$ be a smooth, compactly supported $k+1$ form, and consider 
\be\label{dstar3}
(\dd u, \alpha ) =(u, \dd^\ast\alpha),
\ee
since $\alpha$ has compact support.  Next, $\dd^\ast \alpha$ is co-closed, so $\dd^\ast\alpha \in X$, which means
\[
[u,\dd ^\ast \alpha] = 0.
\]
 \be\label{dstar4}
 (\dd u, \alpha )=-(\nabla u, \nabla \dd^\ast \alpha)=-( u, \nabla^\ast \nabla \dd ^\ast \alpha).
 \ee
 By \eqref{bwp}
 \be 
\nabla^*\nabla \dd^\ast\alpha = \dd^\ast \dd \dd^\ast \alpha + a^2 k(N-k) \dd^\ast\alpha.
\ee
Plugging into \eqref{dstar4}, we obtain
\be
(\dd u, \alpha )=-( u, \dd^\ast \dd \dd^\ast \alpha) - a^2 k(N-k)(u, \dd^\ast\alpha).
\ee
So again, by the definition of distributional derivatives of a current we have

$\dd\dd^\ast \dd  u+(a^2 k(N-k)+1)\dd u$, as a current, when tested against a compactly supported $k+1$ smooth form gives $0$.  This means
\be\label{dstareq3}
\dd \dd^\ast  \dd u+(a^2 k(N-k)+1)\dd u=0,
\ee
as a current. Now, let $\omega=\dd u$, then \eqref{dstareq3} becomes
\[
\dd \dd^\ast  \omega+(a^2 k(N-k)+1)\omega=0,
\] or equivalently, since $\dd \omega=0$,
\[
-\Delta \omega+(a^2 k(N-k)+1)\omega=0.
\]
Again, the elliptic regularity tells us that $\omega$ is a smooth $k+1$ form.  We now integrate the above equation against $\phi_R^2 \omega$, and we get
\be\label{eq3}
(\dd\dd^\ast \omega, \phi_R^2 \omega)+(a^2 k(N-k)+1)(\omega, \phi_R^2 \omega)=0.
\ee\\
We use \eqref{p2b2} to move $\dd$ in the first term onto $\phi^2_R\omega$, and then apply the formula \eqref{dstardef} to produce the following expression
\be\label{eq4}
\begin{aligned}\dd^\ast (\phi^2_R\omega) =   (-1)^{N(k +1) +N +1} \ast\dd\ast(\phi^2_R\omega)=& (-1)^{kN +2N +1}(\dd\phi^2_R\wedge\ast\omega)+ \phi^2_R\dd^\ast\omega \\  = & (-1)^{kN +2N +1}2 \phi_R \dd \phi_R\wedge \ast\omega+ \phi_R ^2 \dd^\ast \omega.
\end{aligned}\ee\\
Then \eqref{eq3} and \eqref{eq4} give
\[ ( \dd^\ast \omega, \phi^2_R\dd^\ast \omega)+(a^2 k(N-k)+1)(\omega, \phi_R^2 \omega)\leq \left| 2( \dd^\ast \omega , \phi_R\dd \phi_R\wedge \ast \omega)\right|,
\]
so just like before, using Cauchy's inequality to the right hand side, we obtain
\[ \left| 2( \dd^\ast\omega , \phi_R\dd \phi_R\wedge \ast\omega)\right|\leq \frac 12 \int\phi_R^2\abs{\dd^\ast \omega}^2 + \frac {8}{R^2}\int \abs{\omega}^2,
\] where we used $(\ast\omega,\ast \omega)=(\omega,\omega)$. Combining with the last inequality we arrive at

\[
 \frac 12\int\phi_R^2\abs{\dd^\ast\omega}^2+(a^2 k(N-k)+1)(\omega, \phi_R^2 \omega)\leq \frac {8}{R^2}\int \abs{\omega}^2,
\]
which by Lemma \ref{importantlemma} allows us to conclude by taking the limit that  $\norm{\omega}_{L^2}=0$ as needed.

\end{proof}

To finish the proof of the theorem we need to show \eqref{hodgepal} holds.  To that end we prove the following lemmas

\begin{lemma}\label{dplemma}
Let $v \in \overline{\dd\Lambda^{k-1}_c}^{H^1}$, then
\be\label{dp}
v=\dd \beta
\ee
for some $(k-1)$-current $\beta$.
\end{lemma}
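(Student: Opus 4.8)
The plan is to show that any $v\in\overline{\dd\Lambda^{k-1}_c}^{H^1}$ can be written as $\dd\beta$ for a $(k-1)$-current $\beta$. The natural strategy is to use the characterization of currents that are exact via de Rham's Theorem \ref{DeRhamHomologous}, exactly as was done for the $1$-form case in \cite{CC13}. By the second Lemma stated after Theorem \ref{DeRhamHomologous}, it suffices to prove that $(v,\psi)=0$ for every co-closed compactly supported $k$-form $\psi\in\Lambda^k_{c,\sigma}$; that lemma then yields $v=\dd\beta$ for some $(k-1)$-degree current $\beta$. So the entire task reduces to verifying this orthogonality relation in the $L^2$ pairing.

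First I would take $v=\lim_{n\to\infty}\dd\alpha_n$ in $H^1$, with $\alpha_n\in\Lambda^{k-1}_c$, which is available by definition of the closure. For each fixed $n$ and each co-closed compactly supported $\psi$, I would compute using \eqref{p2b2} that
\[
(\dd\alpha_n,\psi)=(\alpha_n,\dd^\ast\psi)=0,
\]
since $\dd^\ast\psi=0$ by hypothesis that $\psi$ is co-closed. The pairing $(\cdot,\psi)$ with a fixed compactly supported $\psi$ is continuous with respect to $L^2$ convergence, and $H^1$ convergence implies $L^2$ convergence, so passing to the limit gives $(v,\psi)=0$ for all $\psi\in\Lambda^k_{c,\sigma}$. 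This is precisely the hypothesis needed to invoke the lemma and conclude $v=\dd\beta$.

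The one subtlety I would need to be careful about is the distinction between viewing $v$ as an honest $L^2$ form and viewing it as a current: the orthogonality $(v,\psi)=0$ is computed as an $L^2$ inner product, so I should confirm via \eqref{p2b} that the current-theoretic pairing $(T_v,\psi)=T_v[\star\psi]$ coincides with the $L^2$ inner product $(v,\psi)$ when $v$ is a genuine locally integrable form, which it does by the discussion in Section \ref{currents}. With that identification in hand, the hypothesis $(T_v,\psi)=0$ for all co-closed compactly supported $\psi$ is exactly the input to the lemma.

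The main obstacle, such as it is, lies not in the orthogonality computation—which is a one-line consequence of $\dd\dd^\ast$-type manipulations and continuity—but in correctly applying the de Rham machinery: one must ensure that the integration-by-parts identity $(\dd\alpha_n,\psi)=(\alpha_n,\dd^\ast\psi)$ holds with no boundary terms, which is guaranteed here precisely because $\psi$ (equivalently $\alpha_n$) has compact support, so \eqref{p2b2} applies cleanly. I expect the proof to be short, essentially a transcription of the $1$-form argument of \cite{CC13} to general degree $k$, with the de Rham lemma doing the heavy lifting.
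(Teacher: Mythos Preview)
Your proposal is correct and follows essentially the same approach as the paper: approximate $v$ by $\dd\alpha_n$ in $H^1$ (hence in $L^2$), use \eqref{p2b2} to get $(\dd\alpha_n,\psi)=(\alpha_n,\dd^\ast\psi)=0$ for co-closed compactly supported $\psi$, pass to the limit, and invoke the de Rham lemma following Theorem \ref{DeRhamHomologous}. The paper's proof is the same computation with slightly different notation, and your additional remark on identifying the $L^2$ pairing with the current pairing via \eqref{p2b} is a valid clarification.
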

\begin{proof} We would like to apply \ref{DeRhamHomologous}.  To that end we need to consider the inner product of $v$ with $\theta$ where $\theta$ is compactly supported and co-closed.  Since $v \in \overline{\dd\Lambda^{k-1}_c}^{H^1}$, then 
 $v=\lim_{n\rightarrow \infty} \dd \beta_n$, with $\beta_n \in \Lambda_{c}^{k-1}$, where the limit is in the $H^1$ norm.  This implies that the limit also holds in $L^2$, which in turn implies 
\[
(v,\theta)=\lim_{n \rightarrow \infty}(\dd \beta_n, \theta)=\lim_{n \rightarrow \infty}(\beta_n, \dd^\ast \theta)=0,
\]
since $\theta$ is co-closed.  So the result follows.
 
 \end{proof}

\begin{lemma}\label{dstarlemma}
Let $v\in  \overline{\dd^\ast \Lambda^{k+1}_c}^{H^1}$, then
\be\label{dstaromega}
v=\dd^\ast \omega
\ee
for some $(k+1)$-current $\omega$.
\end{lemma}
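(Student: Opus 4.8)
The plan is to recognize Lemma \ref{dstarlemma} as the Hodge-star dual of Lemma \ref{dplemma} and to reduce it to that lemma by applying the operator $\star$. The key structural fact I would rely on is that on $\H$ the Hodge star is parallel: it is built from the metric $g$ and the volume form $\Vol_M$, both of which are parallel for the Levi-Civita connection, so that $\nabla \star = \star\nabla$ and pointwise $\abs{\star\alpha}=\abs{\alpha}$, $\abs{\nabla\star\alpha}=\abs{\nabla\alpha}$. Hence $\star$ is an isometry of $H^1$, it sends $H^1$-Cauchy sequences to $H^1$-Cauchy sequences, and therefore it preserves $H^1$-closures.

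First I would write $v=\lim_{n\to\infty}\dd^\ast\omega_n$ in $H^1$ with $\omega_n\in\Lambda^{k+1}_c$ and apply $\star$. Using \eqref{dstardef} together with $\star\star=(-1)^{Nk+k}$, one checks that on $(k+1)$-forms $\star\dd^\ast=\pm\dd\star$, the sign depending only on $N$ and $k$. Thus $\star\dd^\ast\omega_n=\pm\dd(\star\omega_n)$ with $\star\omega_n\in\Lambda^{N-k-1}_c$, and passing to the $H^1$ limit (legitimate since $\star$ is an $H^1$-isometry) gives $\star v\in\overline{\dd\Lambda^{N-k-1}_c}^{H^1}$.

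Next I would apply Lemma \ref{dplemma}, with the role of $k$ played by $N-k$, to the $(N-k)$-form $\star v$. This produces an $(N-k-1)$-current $\beta$ with $\star v=\dd\beta$. Applying $\star$ once more and using $\star\star=(-1)^{Nk+k}$ yields $v=\pm\star\dd\beta$. Setting $\omega=\pm\star\beta$, a $(k+1)$-current, and using the current-level analog of \eqref{dstardef}, namely $\dd^\ast\omega=\pm\star\dd\star\omega$ read off from the distributional definitions \eqref{p2b2}, I would compute $\dd^\ast\omega=\pm\star\dd(\star\star\beta)=\pm\star\dd\beta=\pm v$, and fix the overall sign through the choice of $\omega$, so that $v=\dd^\ast\omega$ as required.

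The main obstacle is purely bookkeeping at the level of currents: one must make sense of $\star$ and of the identity $\dd^\ast=\pm\star\dd\star$ for currents rather than for smooth forms, which is exactly de Rham's extension of these operators by duality \cite{DeRhamEng}, and then track the signs through $\star\star=(-1)^{Nk+k}$ and \eqref{dstardef} consistently. An equivalent and perhaps more transparent route is to prove the Hodge-star dual of the currents lemma used for Lemma \ref{dplemma} --- that a degree-$k$ current $T$ satisfies $(T,\theta)=0$ for every closed compactly supported $\theta$ if and only if $T=\dd^\ast P$ for some $(k+1)$-current $P$ --- and then argue verbatim as in Lemma \ref{dplemma}, replacing co-closed test forms by closed ones and noting that $(v,\theta)=\lim_{n\to\infty}(\omega_n,\dd\theta)=0$ whenever $\dd\theta=0$.
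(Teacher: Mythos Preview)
Your proposal is correct and follows essentially the same route as the paper: apply $\star$ to $v$, recognize $\star v$ as an $H^1$-limit of exact forms (the paper redoes the de Rham test against co-closed $\theta$ directly rather than citing Lemma \ref{dplemma}, but this is the same content), obtain $\star v=\dd\beta$ for a current $\beta$, and then undo $\star$ to write $v=\dd^\ast\omega$ with $\omega=\pm\star\beta$. Your observation that $\star$ is an $H^1$-isometry is correct but more than is needed, since only $L^2$ convergence is used when testing against compactly supported $\theta$.
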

\begin{proof}
The idea is as in the previous lemma, but now we consider the inner product of $\star v$ with co-closed, compactly supported $\theta$. Similarly, from the convergence in $H^1$ we can deduce (we do not keep track of the $\pm$ signs coming from $\dd^\ast$ and $\star \star$)
\[
(\star v, \theta)=\lim_{n \rightarrow \infty}(\star \dd^\ast \omega_n, \theta)=\pm \lim_{n \rightarrow \infty}(\star \star \dd \star \omega_n, \theta)= \pm \lim_{n \rightarrow \infty}(\dd \star \omega_n, \theta)= \pm \lim_{n \rightarrow \infty}( \star \omega_n,\dd^\ast \theta)= 0.
\]
It follows from Theorem \ref{DeRhamHomologous} that $\star v=\dd  S$ for some current $ S$.  We now let $\omega=\pm \star S$ to obtain
\[
\star v=\pm\dd \star \omega,
\]
so that 
\[
v=\pm \star \dd \star \dd \omega=\dd^\ast \omega,
\]
as needed.
 \end{proof}

Equation  \eqref{hodgepal} now follows from Lemmas \ref{dplemma}, \ref{dstarlemma} and the equation \eqref{hodgep}.

\section{Proof of Theorem \ref{Thm2}}\label{ProofThm2}

The theorem follows from the following lemma.
 
\begin{lemma}\label{dstarw_l}
\[
\tV=  \overline{\dd^\ast \Lambda^2_c}^{H^1}
\]
\end{lemma}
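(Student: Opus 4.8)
The plan is to prove the sharper, closure-free identity $\Lambda^1_{c,\sigma} = \dd^\ast \Lambda^2_c$ at the level of the generating sets, from which the stated equality of $H^1$-closures is immediate. One inclusion is purely formal: if $\omega \in \Lambda^2_c$, then $\dd^\ast \omega$ is smooth, its support is contained in $\supp \omega$ (since $\dd^\ast$ is a first-order differential operator), and $\dd^\ast \dd^\ast \omega = 0$; hence $\dd^\ast \omega \in \Lambda^1_{c,\sigma}$, so $\dd^\ast \Lambda^2_c \subseteq \Lambda^1_{c,\sigma}$. Taking $H^1$-closures already gives $\overline{\dd^\ast \Lambda^2_c}^{H^1} \subseteq \tV$, using the definition of $\tV$ in \eqref{tV}.

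For the reverse inclusion I would argue constructively, which is where the restriction $N = 2$ enters. Fix $\eta \in \Lambda^1_{c,\sigma}$. Since the manifold is two-dimensional, the Hodge star maps $1$-forms to $1$-forms, and by \eqref{dstardef} the condition $\dd^\ast \eta = 0$ is equivalent to $\dd(\star \eta) = 0$; thus $\star \eta$ is a closed, smooth, compactly supported $1$-form. Because $\mathbb{H}^2(-a^2)$ is simply connected by Cartan--Hadamard, the Poincar\'e lemma furnishes a smooth function $u$ with $\dd u = \star \eta$. Setting $\omega = \star u \in \Lambda^2$, a direct computation with \eqref{dstardef} and the identity $\star \star = \pm \mathrm{id}$ gives $\dd^\ast \omega = \pm \star \dd u = \pm \star(\star \eta)$, which equals $\eta$ up to an overall sign; absorbing that sign into the choice of $\omega$ we obtain $\dd^\ast \omega = \eta$.

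The main obstacle is that this $\omega$ need not yet have compact support, because the scalar primitive $u$ is only determined up to an additive constant and a priori need not vanish near infinity. Here the geometry of the $2$-dimensional case is decisive: outside a large geodesic ball $B_\rho \supseteq \supp \eta$ we have $\dd u = \star \eta = 0$, so $u$ is locally constant there, and since $\mathbb{H}^2(-a^2) \setminus B_\rho$ is connected, $u$ equals a single constant $c$ on the entire exterior. Replacing $u$ by $u - c$, which leaves $\dd u$ unchanged, produces a smooth, compactly supported primitive, so that $\omega = \star(u - c) \in \Lambda^2_c$ and still $\dd^\ast \omega = \eta$. (Equivalently, this records the vanishing of the compactly supported first cohomology of $\mathbb{H}^2(-a^2)$, which is the feature that fails for the naive scalar-potential argument in higher dimensions.)

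This establishes $\Lambda^1_{c,\sigma} \subseteq \dd^\ast \Lambda^2_c$, and combined with the formal inclusion above yields the set equality $\Lambda^1_{c,\sigma} = \dd^\ast \Lambda^2_c$. Taking $H^1$-closures then finishes the proof, since $\tV = \overline{\Lambda^1_{c,\sigma}}^{H^1} = \overline{\dd^\ast \Lambda^2_c}^{H^1}$.
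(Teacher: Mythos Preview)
Your proof is correct and follows essentially the same route as the paper: both arguments use that in dimension $2$ the Hodge star turns the co-closed condition into closedness of a $1$-form, invoke the Poincar\'e lemma on the simply connected $\mathbb{H}^2(-a^2)$ to produce a scalar primitive, and then subtract the constant value attained on the connected exterior of a large ball to obtain compact support. The only cosmetic difference is that you state and prove the pre-closure identity $\Lambda^1_{c,\sigma} = \dd^\ast \Lambda^2_c$ directly, whereas the paper phrases the same computation via an approximating sequence $v_n \to v$ in $H^1$.
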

\begin{proof}
By definition,  $\overline{\dd^\ast \Lambda^2_c}^{H^1}\subset \tV$. To show the inclusion holds the other way, we give a constructive proof. 
 
If $v\in \tV$, then $$v=\lim_{n\rightarrow \infty} v_n,$$ where $v_n \in \Lambda_{c,\sigma}^1$ and the limit is in $H^1$.  Now, for each $v_n$, we have $$\dd^* v_n=0,$$ so $\star \dd \star v_n=0$.  Which means that $\dd\star v_n=0$, so $\star v_n=\dd f_n$, where $f_n$ is some (smooth) function.  Here we use that $\mathbb H^2(-a^2)$ is simply connected, and that we are in two dimensions, so  $\star v_n$ is a $1-$ form.  Then
\[
v_n=-\star \dd f_n.
\]
However, $f_n$ may or may not be compactly supported in 
$\mathbb{H}^2(-a^2)$.    On the other hand, since $v_n$ is compactly supported in $\mathbb{H}^2(-a^2)$, it follows that $\dd f_n = \star v_n$ is also compactly supported in $\mathbb{H}^2(-a^2)$. So, we can take some sufficiently large $R_n$ for which we have
\begin{equation*}
\supp \dd f_n \subset \overline{B(R_n)} ,
\end{equation*}   
and hence the following relation holds.
\begin{equation*}
\dd f_n \big |_{\Omega (R_n)} = 0 , 
\end{equation*}
where $\Omega (R_n) = \{  x \in \mathbb{H}^2(-a^2) : \rho (x) > R_n \}$ .
Since the exterior domain $\Omega (R_n)$ is path-connected, there exists a constant $C_n$ such that
\begin{equation*}
f_n \big |_{\Omega (R_n)} = C_n .
\end{equation*}
We  now consider the new function $\widetilde{f}_n = f_n -C_n$.  It follows that $\tilde{f}_n$ satisfies
\begin{equation*}
\begin{split}
 \supp \widetilde{f}_n \subset \overline{B(R_n)}\qand
 \star v_n = \dd \widetilde{f}_n . 
\end{split}
\end{equation*}
This means that we can replace $f_n$ by $\widetilde{f}_n \in C_c^{\infty}(\mathbb{H}^2(-a^2))$ in our analysis, and write
\[
v_n=-\star \dd \tilde f_n.
\]
We next connect $\star \dd \widetilde{f}_n$ to some $2$-form, $\omega_n$, so that, $v_n=-\star \dd \widetilde{f}_n=\dd^\ast \omega_n$. Let $$\omega_n=\widetilde{f}_n \Vol_{\mathbb{H}^2(-a^2)}.$$  Then
\[
\dd^\ast \omega_n=-\star \dd \star \omega_n=-\star \dd \star \widetilde{f}_n \Vol_{\mathbb{H}^2(-a^2)}= -\star \dd  \widetilde{f}_n=v_n.
\]
Finally, notice that $\omega_n \in \Lambda^2_c(\mathbb{H}^2(-a^2))$ as desired.
 \end{proof}

\bibliography{ref}
\bibliographystyle{plain}

\end{document}